\def\R{\mathbb{R}}
\numberwithin{equation}{section}
\newcommand{\EP}{\exp L^p}
\newtheorem{theorem}{Theorem}[section]
\newtheorem{lemma}[theorem]{Lemma}
\newtheorem{proposition}[theorem]{Proposition}
\newtheorem{corollary}[theorem]{Corollary}
\newtheorem{definition}[theorem]{Definition}
\newtheorem{remark}[theorem]{Remark}
\author[M. Majdoub and S. Tayachi]{Mohamed Majdoub and Slim Tayachi}
\address[M. Majdoub]{Department of Mathematics, College of Science\\ Imam Abdulrahman Bin Faisal University,
P.O. Box 1982, Dammam, Saudi Arabia}
\address{Basic \& Applied Scientific Research Center\\ Imam Abdulrahman Bin Faisal University, P.O. Box 1982, 31441, Dammam, Saudi Arabia}
\email{\sl mmajdoub@iau.edu.sa \;\; \& \;\;\; med.majdoub@gmail.com}
\address[S. Tayachi]{Universit\'e de Tunis El Manar, Facult\'e des Sciences de Tunis, D\'epartement de Math\'ematiques, Laboratoire \'equations aux d\'eriv\'ees partielles (LR03ES04), 2092 Tunis, Tunisie} \email{\sl slim.tayachi@fst.rnu.tn\;\; \& \;\;  slimtayachi@gmail.com}
\subjclass[2010]{35K58, 35A01, 35B40, 46E30}
\keywords{Nonlinear heat equation, Exponential nonlinearity, Global existence, Decay estimate, Orlicz spaces}
\title[The heat equation with exponential nonlinearity]{Global existence and decay estimates for the heat equation with exponential nonlinearity}
\def\R{\mathbb{R}}
\numberwithin{equation}{section}
\begin{document}

\maketitle

\begin{abstract} In this paper we consider the initial value {problem $\partial_{t} u- \Delta u=f(u),$ $u(0)=u_0\in \exp L^p(\R^N),$} where $p>1$ and $f : \R\to\R$ having an exponential growth at infinity with $f(0)=0.$  Under smallness condition on the initial data and for nonlinearity $f$ {such that  $|f(u)|\sim \mbox{e}^{|u|^q}$ as $|u|\to \infty$,} $|f(u)|\sim |u|^{m}$ as $u\to 0,$ $0<q\leq p\leq\,m,\;{N(m-1)\over 2}\geq p>1$, we show that the solution is global.  Moreover, we obtain decay estimates in Lebesgue spaces  for large time which depend on $m.$
\end{abstract}

\section{Introduction}

In this paper we study  the Cauchy problem:
\begin{equation}\label{1.1}
\left\{\begin{array}{cc}
\partial_{t} u- \Delta u=f(u),   \\
u(0)=u_0\in \exp L^p(\R^N),
\end{array}
\right.
\end{equation}
where $p>1$ and $f : \R\to\R$ having an exponential growth at infinity with $f(0)=0.$ Our  interest is the study of the global existence and the decay estimate of solutions. In particular, we aim to complement the cases  noted in \cite[Remark 1.7(i), p. 2382]{MT-ICM2018}.

The initial value problem \eqref{1.1} has attracted considerable attention in the mathematical community and the well-posedness theory in the Lebesgue spaces, especially for polynomial type nonlinearities, has been extensively studied. It is known that for polynomial nonlinearity one can always find a Lebesgue space $L^q,\; q<\infty$ for which \eqref{1.1} is locally well-posed. See for instance \cite{W, Indiana}. As pointed out in \cite{MT-ICM2018}, by analogy with the Lebesgue spaces, which are well-adapted to  the heat equations  with power  nonlinearities (\cite{Indiana}), we are motivated to consider the Orlicz spaces, in order to study  heat equations with power-exponential nonlinearities.

 For the particular case where $f(u)\sim {\rm e}^{|u|^2},\; u$ large, well-posedness  results are proved in the Orlicz space $\exp L^2(\R^N).$ See \cite{IJMS, Ioku, IRT, RT}. It is also shown that if $f(u)\sim {\rm e}^{|u|^s},\, s>2,\; u$ large then the  existence is no longer guaranteed and in fact there is nonexistence in the Orlicz space  $\exp L^2(\R^N).$  See \cite{IRT}. {Recently the authors of \cite{Fu-Io2018} obtained a sufficient condition on a class of  initial data for time local existence for \eqref{1.1}  with general nonlinearity $f(u)$. In particular, initial data are assumed to be nonnegative and an exponential nonlinearity is discussed as an application. Global existence and decay estimates are also established for the nonlinear heat equation with $f(u)\sim {\rm e}^{|u|^2},\; u$ large. See \cite{Ioku, MOT, FKRT}.

{The Orlicz space $\exp L^p(\R^N)$} is  defined as follows
\begin{equation*}
    \exp L^p(\R^N)=\bigg\{\,u\in L^1_{loc}(\R^N);\;\int_{\R^N}\Big({\rm e}^{|u(x)|^p\over\lambda^p}-1\Big)\,dx<\infty ,\,\;\mbox{for some}\,\; \lambda>0\, \bigg\},
\end{equation*}
endowed with the Luxemburg norm
\begin{equation*}
    \|u\|_{\exp L^p(\R^N)}:=\inf\biggr\{\,\lambda>0;\,\,\,\,\int_{\R^N} \Big({\rm e}^{|u(x)|^p\over\lambda^p}-1\Big)\,dx\leq1\,\biggl\}.
\end{equation*}

As is a standard practice, we study \eqref{1.1} via the associated integral equation:
\begin{equation}
\label{integral}
u(t)= {\rm e}^{t\Delta}u_{0}+\int_{0}^{t}{\rm e}^{(t-s)\Delta}\,f(u(s))\,ds,
\end{equation}
where ${\rm e}^{t\Delta}$ is the linear heat semi-group.

In the sequel, we use the following definition of  weak-mild  solutions to Cauchy problem \eqref{1.1}.
\begin{definition}[{Weak-mild solution}]
\label{weakmild}
We say that $u\in L^{\infty}(0,T;\, \exp L^p(\R^N))$  is a weak-mild solution of
the Cauchy problem \eqref{1.1} if $u$ satisfies the associated integral equation \eqref{integral} in $\exp L^p(\R^N)$ for almost all $t\in (0,T)$ and $u(t)\rightarrow u_0$ in the weak$^*$ topology as $t\searrow 0.$
\end{definition}

{The decay estimates depend} on the behavior of the nonlinearity $f(u)$ near $u=0.$ The following behavior near $0$ will be allowed
$|f(u)|\sim |u|^m,$
where ${N(m-1)\over 2}\geq p.$  More precisely, we suppose that the nonlinearity $f$ satisfies
\begin{equation}\label{1.4}
f(0)=0,\qquad|f(u)-f(v)|\leq C\left|u-v\right|\bigg(|u|^{m-1}{\rm e}^{\lambda |u|^p}+|v|^{m-1}{\rm e}^{\lambda |v|^p}\bigg),
\end{equation}
where  ${N(m-1)\over 2}\geq p>1$, $C>0,$ and $\lambda>0$ are constants. Our aim is to obtain global existence to the Cauchy problem \eqref{1.1} for small initial data in $\exp L^p(\R^N)$. We denote the norm in the Lebesgue space $L^r(\R^N),\; 1\leq r\leq \infty$ by $\|\cdot\|_r.$    We have obtained the following.
\begin{theorem}
\label{GE} Let $N\geq 1,\; m\geq p>1,\; {N(m-1)\over 2}\geq p.$ Assume that the nonlinearity $f$ satisfies \eqref{1.4}. Then, there exists a positive constant $\varepsilon>0$ such that {for} every initial data $u_0\in \exp L^p(\R^N)$
with $\|u_0\|_{\exp L^p(\R^N)} \leqslant\varepsilon,$ there exists a weak-mild solution $u\in L^{\infty}(0,\infty;\exp L^p(\R^N))$
of the Cauchy problem \eqref{1.1} satisfying
\begin{equation} \label{1.8}
    \lim_{t\longrightarrow0}\|u(t)-{\rm e}^{t\Delta}u_{0}\|_{\exp L^p(\R^N)}=0.
\end{equation}
Moreover, we have
\begin{equation}
\label{Linfinibihar}
\|u(t)\|_a \leq\,C\,t^{-\left({1\over m-1}-{N\over 2a}\right)},\quad\,t>0,
\end{equation}
where  $a$ verifies:
\begin{itemize}
\item[(i)] If ${N\over 2}>{p\over p-1}$ then  ${N\over 2}(m-1)<a<{N\over 2}(m-1)\frac{1}{(2-m)_+}\,.$
\item[(ii)] If ${N\over 2}={p\over p-1}$ then ${N\over 2}(m-1)<a<{N\over 2}(m-1)\frac{1}{(2-m)_+}\,.$
\item[(iii)] If ${N\over 2}<{p\over p-1}$ and $(2-m)_+<{N(p-1)\over 2p}$ then ${p\over p-1}(m-1)<a<{N\over 2}(m-1)\frac{1}{(2-m)_+}\,.$
\end{itemize}
In particular, {for all the above  cases,} $0<{1\over m-1}-{N\over 2a}<1.$
\end{theorem}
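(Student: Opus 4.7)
The plan is to construct the solution via a Banach fixed-point argument applied to the map
\[
\Phi(u)(t) := e^{t\Delta}u_0 + \int_0^t e^{(t-s)\Delta}f(u(s))\,ds
\]
in a complete metric space designed to carry both the $\exp L^p$-smallness and the weighted $L^a$-decay. Setting $\gamma := \frac{1}{m-1} - \frac{N}{2a}$, the last assertion of the theorem ensures $0<\gamma<1$; I would work in
\[
Y_M := \Bigl\{ u \in L^\infty(0,\infty;\exp L^p(\R^N)) : \sup_{t>0}\|u(t)\|_{\exp L^p(\R^N)} + \sup_{t>0} t^\gamma \|u(t)\|_a \leq M \Bigr\},
\]
equipped with the obvious metric. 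The inequalities $0<\gamma<1$ are exactly what is needed for the Beta-type time integrals $\int_0^t (t-s)^{-\alpha} s^{-\beta}\,ds$ appearing in the nonlinear estimates to produce the correct factor $t^{-\gamma}$.

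For the linear part, contractivity of the heat semigroup on the Orlicz space $\exp L^p$ gives $\|e^{t\Delta}u_0\|_{\exp L^p} \leq \|u_0\|_{\exp L^p}$, while composing the standard heat-kernel $L^\rho$--$L^a$ smoothing with an embedding $\exp L^p(\R^N) \hookrightarrow L^\rho(\R^N)$ (valid for a range of $\rho$ depending on the regime (i), (ii) or (iii)) delivers $\|e^{t\Delta}u_0\|_a \leq C t^{-\gamma}\|u_0\|_{\exp L^p}$.

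For the nonlinear part, setting $v=0$ in \eqref{1.4} yields the pointwise bound $|f(u)| \leq C|u|^m e^{\lambda|u|^p}$. I would write $|u|^m e^{\lambda|u|^p} = |u|^m + |u|^m\bigl(e^{\lambda|u|^p}-1\bigr)$ and apply H\"older's inequality so as to split each term into an $L^b$-norm of $u$ (controlled by the $L^a$-decay, possibly combined with interpolation against $\exp L^p$) and an $L^{r_2}$-norm of $e^{\lambda|u|^p}-1$. The latter is finite as soon as $r_2\lambda\|u\|_{\exp L^p}^p$ is small enough, a smallness secured by shrinking $\varepsilon$ (hence $M$) relative to $\lambda, r_2$. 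Composing with the heat-semigroup $L^r$--$L^a$ and $L^r$--$\exp L^p$ smoothings and integrating in time, I obtain self-mapping and contraction bounds of the form $\|\Phi(u)(t)\|_a + t^\gamma\|\Phi(u)(t)\|_{\exp L^p} \leq C t^{-\gamma}\bigl(\|u_0\|_{\exp L^p}+M^m\bigr)$, with contraction constant less than $1$ after a further reduction of $\varepsilon$; the bilinear Lipschitz estimate in \eqref{1.4} on $f(u)-f(v)$ is treated by the same splitting. The fixed point lies in $Y_M$, yielding \eqref{Linfinibihar} by construction, and \eqref{1.8} follows by letting $t\to 0^+$ in the nonlinear contribution to \eqref{integral}, which is $o(1)$ in $\exp L^p$ thanks to the positive power of $t$ produced by the time integration.

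The principal obstacle is the reconciliation of the two smallness mechanisms: $\exp L^p$-smallness supplies, via a Trudinger--Moser-type argument, the finiteness of $\|e^{\lambda|u|^p}-1\|_{r_2}$, while the weighted $L^a$-decay feeds the polynomial factor $|u|^m$. The admissibility conditions on the H\"older and smoothing exponents, together with the convergence of the resulting time integrals, are precisely what force the case split (i)--(iii): the lower bound on $a$ changes according to whether the heat-scaling exponent $N(m-1)/2$ or the Trudinger--Moser scaling $p(m-1)/(p-1)$ is the binding constraint, which in turn depends on the relative size of $N/2$ and $p/(p-1)$.
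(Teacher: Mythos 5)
Your overall architecture coincides with the paper's: a fixed point for $\Phi$ in a space $Y_M$ carrying both the $L^\infty_t(\exp L^p)$ norm and the weighted norm $\sup_{t>0}t^\sigma\|u(t)\|_a$, with the linear part handled exactly as you describe (contractivity on $\exp L^p$, and $\exp L^p\hookrightarrow L^{N(m-1)/2}$ composed with $L^{N(m-1)/2}$--$L^a$ smoothing). The first genuine gap is in your uniform bound on $\bigl\|\int_0^t{\rm e}^{(t-s)\Delta}f(u(s))\,ds\bigr\|_{\exp L^p}$: you propose to bound $\|f(u(s))\|$ in fixed Lebesgue norms uniformly in $s$ and then integrate the $L^q\to\exp L^p$ smoothing kernel over $(0,\infty)$. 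That kernel behaves like $t^{-N/2}\bigl(\log(t^{-N/2}+1)\bigr)^{-1/p}\sim t^{-N(p-1)/(2p)}$ as $t\to\infty$, so it is integrable at infinity only when $N>\frac{2p}{p-1}$, i.e.\ only in case (i) (Corollary \ref{lemme estimates}); in cases (ii) and (iii) your estimate diverges for large $t$. The paper's remedy for $N<\frac{2p}{p-1}$ is to split the time integral at $t-\eta^{-2/N}$, convert the logarithm into the power $(t-s)^{N/(2p)}$ via \eqref{log}, and harvest decay in $s$ from $\|f(u(s))\|_q$ using the $L^a$-decay of $u$ --- this is exactly where the hypothesis $(2-m)_+<\frac{N(p-1)}{2p}$ and the modified lower bound $a>\frac{p(m-1)}{p-1}$ of case (iii) come from; for $N=\frac{2p}{p-1}$ one needs in addition the auxiliary Orlicz space $L^\phi$ with $\phi(s)={\rm e}^{s^p}-1-s^p$ (Corollary \ref{forn=8}) plus a separate $L^p$ estimate. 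So the case split is driven by large-time integrability of the linear smoothing, not, as you suggest, only by H\"older admissibility.

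The second gap concerns the contraction estimate in the metric $d(u,v)=\sup_{t>0}t^\sigma\|u(t)-v(t)\|_a$. Your single splitting ${\rm e}^{\lambda|u|^p}=1+({\rm e}^{\lambda|u|^p}-1)$ leaves the factor $\|{\rm e}^{\lambda|u|^p}-1\|_{q_2}$ merely bounded, so all the $s$-decay must come from one interpolation exponent, and you never verify that a consistent choice of H\"older and interpolation exponents exists over the whole stated range of $a$ in each regime --- yet this is precisely where all the constraints on $a$ live. The paper instead expands the exponential in a Taylor series, $|f(u)-f(v)|\leq C|u-v|\sum_k\frac{\lambda^k}{k!}(|u|^{pk+m-1}+|v|^{pk+m-1})$, and chooses $k$-dependent interpolation parameters $(\theta_k,\rho_k)$ with $\theta_k\to0$, $\rho_k\to\infty$, so that each term satisfies the exact scaling balance $1-\frac{N}{2}(\frac1r-\frac1a)-\sigma\theta_k(pk+m-1)=0$ and the constants $\Gamma(\rho_k/p+1)^{(pk+m-1)(1-\theta_k)/\rho_k}\leq C^k k!$ are cancelled by the $\lambda^k/k!$ of the series (Lemma \ref{params} and \eqref{4.21b}). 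This bookkeeping is what the authors single out as the essential ingredient, and it is absent from your proposal; without it (or a complete verification that your two-term splitting admits compatible exponents in all three cases), the argument is not closed.
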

\begin{remark}{\rm We do not know if the estimate \eqref{Linfinibihar} holds for  more large intervals of $a.$}
\end{remark}
\begin{remark}{\rm The case (i), and only this case, was proved in \cite{MT-ICM2018} but with a supplementary condition on $a$ which we remove here.}
\end{remark}
\begin{remark}{\rm
The assumption $p>1$ is needed in Corollary \ref{lemme estimates} and Corollary \ref{forn=8} below.}\end{remark}
\begin{remark}
{\rm From \cite[Proposition 3.1, p. 88]{Indiana} or \cite[Lemma 7, p. 280]{BC}, we know that the smoothing estimate \eqref{x} below holds for smooth bounded domains and $\Delta=\Delta_D$ with homogeneous Dirichlet boundary conditions. Hence, all linear estimates needed in our proof (see Section 3 below) still holds for smooth bounded domains. This means that we can replace $\R^N$ by any smooth bounded domain $\Omega$ and obtain the same results.}
\end{remark}
The assumption for the nonlinearity covers the cases
$$
f(u)=\pm |u|^{m-1}u{\rm e}^{|u|^p}, \quad m\geq 1+{2p\over N}.
$$
The global existence part of Theorem \ref{GE} is known for $p=2$ (see \cite{Ioku}). The estimate \eqref{Linfinibihar} was obtained in \cite{Ioku} for $p=2$ and $m=1+{4\over N}.$ This is improved in \cite{MOT} for $p=2$ and any $m\geq 1+{4\over N}.$ The fact  that estimate \eqref{Linfinibihar} depends on the smallest power of the nonlinearity $f(u)$ is known in \cite{STW} but only for nonlinearities having polynomial growth. An essential part of our proof consists in a careful combinations of the Taylor expansion and the H\"older inequality, which are firstly developed in \cite{NO1998} for a critical Schr\"odinger equation.

The rest of this paper is organized as follows. In the next section, we recall some basic facts and useful tools about Orlicz spaces.  Section 3 is devoted to some crucial estimates on the linear heat semi-group. {In Section 4 we give the proof of Theorem \ref{GE}. The proof uses similar argument to that in \cite{MT-ICM2018,MOT,Ioku,CW}.} In all this paper, $C$ will be a positive constant which  may have different values at different places. Also, $L^r(\R^N)$, $\exp L^r(\R^N)$, will be written respectively $L^r$ and $\exp L^r$.

%%%%%%%%%%%%%%%%%%%%%%%%%%%%%%%%%%%%%%%%%%%%%%%%%%%%%%%%%%%%%%%%%%%%%%%%%%%%%%%%%%%%%%%%%%%%%%%ù%%%%%%%%%%%%%%%%%%%%%%%%%%%%%%%%%%
\section{Preliminaries}
%%%%%%%%%%%%%%%%%%%%%%%%%%%%%%%%%%%%%%%%%%%%%%%%%%%%%%%%%%%%%%%%%%%%%%%%%%%%%%%%%%%%%%%%%%%%%%%ù%%%%%%%%%%%%%%%%%%%%%%%%%%%%%%%%%%
In this section we recall the definition of the  Orlicz spaces and some
related basic facts. For a complete presentation and more details, we refer the reader to
\cite{Adams,RR,BiOr, HH2019, RR1}. We also give some preliminaries estimates.
\begin{definition}\label{deforl}\quad\\
Let $\phi : [0,\infty)\to[0,\infty)$ be a convex increasing function such that
$$
\phi(0)=0=\lim_{s\to 0^+}\,\phi(s),\quad
\lim_{s\to\infty}\,\phi(s)=\infty.
$$
We say that a  function $u\in L^1_{loc}(\R^N)$ belongs to
$L^\phi$ if there exists $\lambda>0$ such that
$$
\int_{\R^N}\,\phi\left(\frac{|u(x)|}{\lambda}\right)\,dx<\infty.
$$
We denote then
\begin{equation}
\label{Luxemb}
\|u\|_{L^\phi}=\inf\,\left\{\,\lambda>0;\quad\int_{\R^N}\,\phi\left(\frac{|u(x)|}{\lambda}\right)\,dx\leq
1\,\right\}.
\end{equation}
\end{definition}
It is known that $\left(L^\phi,\|\cdot\|_{L^\phi}\right)$ is a Banach space. Note that, if $\phi(s)=s^p,\, 1\leq
p<\infty$,  then $L^\phi$ is nothing else than the  Lebesgue space $L^p$. Moreover, for $u\in L^\phi$ with $K:=\|u\|_{L^\phi}>0$,
we have
$$\left\{\,\lambda>0;\quad\int_{\R^N}\,\phi\left(\frac{|u(x)|}{\lambda}\right)\,dx\leq
1\,\right\}=[K, \infty[\,. $$
In particular
\begin{equation}
\label{med1}
\int_{\R^N}\,\phi\left(\frac{|u(x)|}{\|u\|_{L^\phi}}\right)\,dx\leq 1.
\end{equation}
\begin{remark}
{\rm We may replace in \eqref{Luxemb} the constant $1$ by any positive constant. This change
the norm $\|\cdot\|_{L^\phi}$ to an equivalent norm.}
\end{remark}

We also recall the following well known properties (see \cite[pp. 56-57 and p. 83]{RR1} and \cite[Lemma 3.7.7]{HH2019}).
\begin{proposition} \label{fatou} We have
\begin{itemize}
\item[(i)] $L^1\cap L^\infty\subset L^\phi\subset L^1+L^\infty$.
\item[(ii)] {\it Lower semi-continuity}:
$$
u_n\to u\quad\mbox{a.e.}\quad\Longrightarrow\quad\|u\|_{L^\phi}\leq
\liminf_{n\to\infty}\|u_n\|_{L^\phi}.
$$
\item[(iii)] {\it Monotonicity}:
$$
|u|\leq|v|\quad\mbox{
a.e.}\quad\Longrightarrow\quad\|u\|_{L^\phi}\leq\|v\|_{L^\phi}.
$$
\item[(iv)] {\it Strong Fatou property}:
$$
0\leq u_n\nearrow u\quad\mbox{
a.e.}\quad\Longrightarrow\quad\|u_n\|_{L^\phi}\nearrow\|u\|_{L^\phi}.
$$
\item[(v)] {\it Strong and modular convergence}:
$$
u_n\to u \;\;\mbox{in}\;\; L^\phi \;\;\Longrightarrow\;\; \int_{\R^N}\phi(u_n-u) dx\to 0 .
$$
\end{itemize}
\end{proposition}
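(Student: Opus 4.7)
The plan is to handle the five items in turn, exploiting only the convexity of $\phi$, its monotonicity, the normalisation $\phi(0)=0$, and the classical Fatou lemma from measure theory; these are standard Orlicz facts and the arguments follow the presentations of \cite{RR1, HH2019}.

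For (i), the inclusion $L^1\cap L^\infty\subset L^\phi$ follows from the convexity bound $\phi(t)\leq t\,\phi(1)$ for $t\in[0,1]$, a direct consequence of $\phi(0)=0$ combined with convexity. Choosing $\lambda=\max(\|u\|_\infty,1)$ gives $\phi(|u|/\lambda)\leq\phi(1)\,|u|/\lambda\in L^1$, hence $u\in L^\phi$. For the reverse inclusion $L^\phi\subset L^1+L^\infty$, fix any $\lambda$ with $\int\phi(|u|/\lambda)\,dx<\infty$ and any threshold $M\geq\lambda$ such that $\phi(s)\geq s$ for $s\geq M/\lambda$ (available since $\phi(s)\to\infty$ and $\phi(s)/s$ is eventually nondecreasing by convexity). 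Splitting $u=u\,\mathbf{1}_{\{|u|\leq M\}}+u\,\mathbf{1}_{\{|u|>M\}}$ puts the first piece in $L^\infty$, and the second is in $L^1$ because $|u|\,\mathbf{1}_{\{|u|>M\}}\leq\lambda\,\phi(|u|/\lambda)$.

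For (ii), let $L=\liminf_n\|u_n\|_{L^\phi}$; we may assume $L<\infty$. For each $\lambda>L$, extract a subsequence with $\|u_{n_k}\|_{L^\phi}<\lambda$, so by \eqref{med1} $\int\phi(|u_{n_k}|/\lambda)\,dx\leq 1$. Since $\phi$ is nonnegative and continuous and $u_{n_k}\to u$ a.e., the classical Fatou lemma yields $\int\phi(|u|/\lambda)\,dx\leq 1$, whence $\|u\|_{L^\phi}\leq\lambda$; letting $\lambda\searrow L$ concludes. Property (iii) is immediate from the increase of $\phi$: $|u|\leq|v|$ a.e.\ implies $\phi(|u|/\lambda)\leq\phi(|v|/\lambda)$, so any admissible $\lambda$ for $v$ is admissible for $u$, giving $\|u\|_{L^\phi}\leq\|v\|_{L^\phi}$. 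Property (iv) then combines (iii) and (ii): monotonicity shows $\|u_n\|_{L^\phi}$ is nondecreasing and bounded above by $\|u\|_{L^\phi}$, while lower semi-continuity gives $\|u\|_{L^\phi}\leq\liminf_n\|u_n\|_{L^\phi}$, forcing the monotone convergence $\|u_n\|_{L^\phi}\nearrow\|u\|_{L^\phi}$.

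For (v), the key tool is the homogeneity-type inequality $\phi(t\sigma)\leq t\,\phi(\sigma)$ valid for $t\in[0,1]$ (convexity plus $\phi(0)=0$). Setting $\lambda_n:=\|u_n-u\|_{L^\phi}\to 0$, for $n$ large enough that $\lambda_n\leq 1$ we apply this with $t=\lambda_n$ and $\sigma=|u_n-u|/\lambda_n$ to obtain
\begin{equation*}
\int_{\R^N}\phi(|u_n-u|)\,dx\;\leq\;\lambda_n\int_{\R^N}\phi\Bigl(\tfrac{|u_n-u|}{\lambda_n}\Bigr)\,dx\;\leq\;\lambda_n,
\end{equation*}
where the last inequality is \eqref{med1} applied to $u_n-u$. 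Thus the modular tends to zero. The only real subtlety is this last step: one must rescale by $\lambda_n$ \emph{before} invoking the homogeneity bound, and it is precisely the strong convergence $\lambda_n\to 0$ (ensuring $\lambda_n\leq 1$ eventually) that licences the use of $\phi(t\sigma)\leq t\phi(\sigma)$; the rest of the proposition is bookkeeping on convexity and Fatou.
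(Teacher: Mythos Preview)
Your proof is correct and complete; the paper itself does not supply a proof of this proposition but simply cites \cite[pp.~56--57, p.~83]{RR1} and \cite[Lemma~3.7.7]{HH2019}, so there is nothing to compare against beyond noting that your arguments are the standard ones found in those references.

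One small imprecision worth tightening in part~(i): you claim one can find $M$ with $\phi(s)\geq s$ for $s\geq M/\lambda$, but this need not hold (take $\phi(s)=s/2$). What you actually have from convexity and $\phi(0)=0$ is that $\phi(s)/s$ is nondecreasing on $(0,\infty)$, hence $\phi(s)\geq cs$ for $s\geq s_0$ with $c=\phi(s_0)/s_0>0$ for any fixed $s_0>0$; this yields $|u|\,\mathbf{1}_{\{|u|>\lambda s_0\}}\leq (\lambda/c)\,\phi(|u|/\lambda)$, which is enough. The rest of your argument is clean.
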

When $\phi(s)={\rm e}^{s^p}-1$, we denote the space $L^\phi$ by $\exp L^p$.

%%%%%%%%%%%%%%%%%%%%%%%%%%%%%%%%%%%%%%%%%%%%%%%%%%%%%%%%%%%%%%%%%%%%%%%

 The following Lemma summarize the relationship between Orlicz and Lebesgue spaces.
\begin{lemma}(\cite{RT, MT-ICM2018})\label{Orl-Leb}   We have
\begin{itemize}
\item[(i)] $\EP\not\hookrightarrow L^\infty,$ \; $p\geq 1$.
\item[(ii)] $\EP \not\hookrightarrow L^r$,\;\; for all\;\;$1\leq r<p,$ \; $p>1$.
\item[(iii)] $L^q\cap L^\infty\hookrightarrow \EP$,\;\; for all\;\; $1\leq q\leq p$. More precisely
\end{itemize}
\begin{equation}
\label{embed}
\|u\|_{\exp L^p}\leq \frac{1}{\left(\log 2\right)^{{1\over p}}}\biggr(\|u\|_{q}+\|u\|_{\infty}\biggl)\,.
\end{equation}
\end{lemma}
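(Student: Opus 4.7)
The argument splits naturally into the three claims. For \emph{part} (iii) — the quantitative embedding — I will work directly from the Luxemburg definition: setting $\lambda := (\|u\|_q+\|u\|_\infty)/(\log 2)^{1/p}$, it suffices to show $\int_{\R^N}(e^{|u|^p/\lambda^p}-1)\,dx \leq 1$. The choice of $\lambda$ is engineered so that $|u(x)|^p/\lambda^p \leq \|u\|_\infty^p/\lambda^p \leq \log 2$ almost everywhere, reducing the problem to bounding $e^s-1$ on $[0,\log 2]$. Since $s\mapsto e^s-1$ is convex with value $0$ at $0$ and value $1$ at $\log 2$, the chord estimate gives $e^s-1 \leq s/\log 2$ on $[0,\log 2]$. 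Applying this pointwise and integrating yields $\int(e^{|u|^p/\lambda^p}-1)\,dx \leq \|u\|_p^p/(\lambda^p \log 2)$. The interpolation bound $\|u\|_p \leq \|u\|_q^{q/p}\|u\|_\infty^{1-q/p}$ combined with Young's inequality (with exponents $p/q$ and $p/(p-q)$) gives $\|u\|_p \leq (q/p)\|u\|_q+(1-q/p)\|u\|_\infty \leq \|u\|_q+\|u\|_\infty = \lambda(\log 2)^{1/p}$, hence $\|u\|_p^p \leq \lambda^p \log 2$, which closes the modular inequality.

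\emph{Parts} (i) and (ii) are handled by explicit counterexamples. For (i), I propose $u(x) := \chi_{B(0,1/2)}(x)(\log(1/|x|))^{1/p}$, which is locally integrable but unbounded at the origin, hence $u \notin L^\infty$. The modular integral reduces to $\int_{B(0,1/2)}(|x|^{-1/\lambda^p}-1)\,dx$, finite as soon as $1/\lambda^p < N$, showing $u \in \EP$. For (ii), I take a function with slow decay at infinity, for instance $u(x) := (1+|x|)^{-N/p}(\log(2+|x|))^{-\alpha}$ for some fixed $\alpha > 1/p$. In polar coordinates, $\int|u|^r\,dx$ behaves at infinity like $\int^\infty \rho^{N(1-r/p)-1}(\log\rho)^{-\alpha r}\,d\rho$, divergent for every $1 \leq r < p$ since the polynomial exponent exceeds $-1$ (and this is precisely where the assumption $p>1$ enters). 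On the other hand, $u$ is bounded on $\R^N$ and $|u|^p\in L^1$ (chosen via $\alpha p>1$), so the Taylor bound $e^t-1 \leq Ct$ for small $t$ gives $\int(e^{|u|^p/\lambda^p}-1)\,dx \leq C\|u\|_p^p/\lambda^p \leq 1$ for $\lambda$ sufficiently large, hence $u\in\EP$.

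The main subtlety is the sharp constant in (iii): one has to recognize that the chord bound on $[0,\log 2]$ is precisely what matches the Luxemburg normalization $\leq 1$, and that Young's inequality converts the geometric-mean interpolation estimate $\|u\|_q^{q/p}\|u\|_\infty^{1-q/p}$ into the additive bound $\|u\|_q+\|u\|_\infty$. Once this combination is in place, (iii) is immediate. The non-embedding counterexamples in (i) and (ii) are then routine, once the correct extremals of the $\EP$-scale are identified — a logarithmic singularity at the origin for (i), and polynomial-logarithmic decay at infinity for (ii).
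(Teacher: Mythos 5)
Your proof is correct. Note first that the paper itself does not prove Lemma \ref{Orl-Leb}: it is stated with a citation to \cite{RT, MT-ICM2018}, so there is no in-paper argument to compare against line by line; the only related in-paper proof is that of Lemma \ref{sarah55}, which goes in the opposite direction (lower-bounding $\mathrm{e}^z-1$ by a power). Your argument for (iii) is a clean, self-contained version of the standard one: the choice $\lambda=(\|u\|_q+\|u\|_\infty)/(\log 2)^{1/p}$ forces $|u|^p/\lambda^p\le\log 2$ a.e., the chord bound $\mathrm{e}^s-1\le s/\log 2$ on $[0,\log 2]$ reduces the modular to $\|u\|_p^p/(\lambda^p\log 2)$, and interpolation plus Young gives $\|u\|_p\le\|u\|_q+\|u\|_\infty=\lambda(\log 2)^{1/p}$, closing the estimate; this is slightly slicker than the usual series expansion of the modular term by term, and recovers the same constant. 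The counterexamples for (i) and (ii) are the right extremals and the computations check out: $\chi_{B(0,1/2)}(\log(1/|x|))^{1/p}$ has modular $\int_{B(0,1/2)}(|x|^{-1/\lambda^p}-1)\,dx<\infty$ for $\lambda^p>1/N$ yet is unbounded, and $(1+|x|)^{-N/p}(\log(2+|x|))^{-\alpha}$ with $1/p<\alpha$ lies in $L^p\cap L^\infty\subset\exp L^p$ while $\int|u|^r\,dx$ diverges for $r<p$ because the radial exponent $N(1-r/p)-1$ exceeds $-1$. One cosmetic quibble: in (ii) the hypothesis $p>1$ is not what makes that exponent exceed $-1$ (that is the condition $r<p$); it is only needed so that the range $1\le r<p$ is nonempty.
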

 We have the embedding: $\exp L^p \hookrightarrow L^{q}$ for every $1\leq p\leq q$. More precisely:
\begin{lemma}(\cite{RT, MT-ICM2018})\label{sarah55} For every $1\leq p\leq q<\infty,$ we have
\begin{equation}\label{2.1}
    \|u\|_{q} \leqslant\left(\Gamma\left(\frac{q}{p}+1\right)\right)^\frac{1}{q}\|u\|_{\exp L^p},
  \end{equation}
where $\Gamma(x):=\displaystyle\int_0^{\infty}\tau^{x-1}{\rm e}^{-\tau}\,d\tau, \; x>0.$
\end{lemma}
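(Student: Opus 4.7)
The plan is to reduce the estimate to a pointwise inequality and then integrate against the modular bound defining $\|u\|_{\exp L^p}$. Set $\lambda:=\|u\|_{\exp L^p}$; if $\lambda=0$ then $u=0$ a.e.\ and the claim is trivial, so assume $\lambda>0$. By \eqref{med1}, one has $\int_{\R^N}({\rm e}^{|u|^p/\lambda^p}-1)\,dx\leq 1$. Consequently, the desired bound $\|u\|_q^q\leq \Gamma(q/p+1)\lambda^q$ follows at once, by integration in $x$, from the pointwise inequality
\begin{equation*}
t^{q/p}\leq \Gamma\!\left(\tfrac{q}{p}+1\right)\bigl({\rm e}^t-1\bigr),\qquad t\geq 0,
\end{equation*}
applied to $t=|u(x)|^p/\lambda^p$. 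Taking $q$-th roots then yields the stated estimate.

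To establish this pointwise inequality, I set $\alpha:=q/p\geq 1$ and study $h(t):=\Gamma(\alpha+1)({\rm e}^t-1)-t^\alpha$. Since $h(0)=0$, it suffices to check $h'(t)\geq 0$ for $t\geq 0$. A direct computation gives $h'(t)=\alpha\bigl(\Gamma(\alpha){\rm e}^t-t^{\alpha-1}\bigr)$, so the task reduces to showing $\Gamma(\alpha)\geq t^{\alpha-1}{\rm e}^{-t}$ for every $t\geq 0$. The right-hand side is maximized at $t=\alpha-1$ with value $(\alpha-1)^{\alpha-1}{\rm e}^{-(\alpha-1)}$, so it is enough to prove $\Gamma(\alpha)\geq(\alpha-1)^{\alpha-1}{\rm e}^{-(\alpha-1)}$. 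For this, I would bound the Gamma integral from below by its tail on $[\alpha-1,\infty)$, substitute $s=t-(\alpha-1)$, and use the elementary inequality $(s+\alpha-1)^{\alpha-1}\geq(\alpha-1)^{\alpha-1}$, valid for $s\geq 0$ and $\alpha\geq 1$, together with $\int_0^\infty {\rm e}^{-s}\,ds=1$.

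The main obstacle I anticipate is precisely the non-integer case $\alpha\notin\mathbb{N}$. When $q$ is an integer multiple of $p$, the lemma follows immediately from the Taylor expansion of $\exp$, which yields $\|u\|_{kp}^{kp}\leq k!\,\lambda^{kp}$ directly, and $\Gamma(k+1)=k!$ matches. However, trying to pass to general $q$ by H\"older interpolation between two such exponents produces a bound governed by $(k!)^{1-s}((k+1)!)^{s}$, which by log-convexity of $\Gamma$ is \emph{larger} than $\Gamma(k+s+1)$—so interpolation goes in the wrong direction. The pointwise argument sketched above circumvents this loss entirely and delivers the sharp $\Gamma(q/p+1)^{1/q}$ constant in one stroke.
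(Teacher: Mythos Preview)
Your proof is correct and follows essentially the same route as the paper: both reduce the lemma to the pointwise inequality ${\rm e}^{z}-1\geq z^{\alpha}/\Gamma(\alpha+1)$ for $z\geq 0$, $\alpha\geq 1$, and then integrate against the modular bound \eqref{med1}. The only difference is that the paper quotes this inequality from \cite{RT} (as \eqref{RT}) without proof, whereas you supply a self-contained derivative argument for it; your last paragraph about interpolation is commentary rather than part of the argument and can be dropped.
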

{For reader's convenience, we give the proof here.}
\begin{proof} We may assume that
$$
K:=\|u\|_{\exp L^p}>0.
$$
From \cite{RT} we know that
\begin{equation}
\label{RT}
{\rm e}^{z}-1\geq \frac{z^\alpha}{\Gamma(\alpha+1)},\quad z\geq 0,\;\;\;\alpha\geq 1.
\end{equation}
Using \eqref{med1} and \eqref{RT}, we infer
$$
1\geq \int_{\R^N}\, \left({\rm e}^{|\frac{u(x)}{K}|^p}-1\right)\,dx\geq \int_{\R^N}\,\frac{|\frac{u(x)}{K}|^{p\alpha}}{\Gamma(\alpha+1)}\,dx.
$$
Hence
$$
K\geq \frac{\|u\|_{p\alpha}}{(\Gamma(\alpha+1))^{\frac{1}{p\alpha}}}.
$$
We conclude by choosing $\alpha=\frac{q}{p}.$
\end{proof}
We recall that the following properties of the functions $\Gamma$ and ${\mathcal{B}}$ given by
$$ {\mathcal{B}}(x,y)=\int_0^1\tau^{x-1}(1-\tau)^{y-1}d\tau,\quad x,\; y>0.$$ We have
\begin{equation}
\label{gamma4}
 {\mathcal{B}}(x,y)=\frac{\Gamma(x+y)}{\Gamma(x)\Gamma(y)},\quad\; x,\; y>0,
\end{equation}

\begin{equation}
\label{gamma1}
\Gamma(x)\geq C>0,\quad x>0,
\end{equation}

\begin{equation}
\label{gamma2}
\Gamma(x+1)\sim \left({x\over \rm{e}}\right)^x\, \sqrt{2\pi x}, \; \mbox{ as }\; x \to \infty,
\end{equation}
and
\begin{equation}
\label{gamma3}
\Gamma(x+1)\leq C x^{x+{1\over 2}},\quad\; x\geq 1.
\end{equation}

The following lemma will be useful in the proof of the global existence.
\begin{lemma}(\cite[Lemma 2.6, p. 2387]{MT-ICM2018})
\label{med}
Let $\lambda>0$, $1\leq p,\; q<\infty$ and $K>0$ such that $\lambda q\,K^p\leq 1$. Assume that
$$
\|u\|_{\exp L^p}\leq K\,.
$$
Then
$$
\|{\rm e}^{\lambda |u|^p}-1\|_q\leq \left(\lambda q\,K^p\right)^{{1\over q}}\,.
$$
\end{lemma}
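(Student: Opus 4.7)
\medskip
\noindent\textbf{Proof plan.} The goal is to control $\int_{\R^N}\bigl(\mathrm{e}^{\lambda|u|^p}-1\bigr)^q\,dx$, and then take the $q$-th root. I would base the argument on two elementary pointwise inequalities, combined with the defining property of the Luxemburg norm.

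First, I would establish the pointwise bound
$$
\bigl(\mathrm{e}^z-1\bigr)^q\leq \mathrm{e}^{qz}-1,\qquad z\geq 0,\ q\geq 1.
$$
This follows from the chain
$$
(\mathrm{e}^z-1)^q=(\mathrm{e}^z-1)^{q-1}(\mathrm{e}^z-1)\leq \mathrm{e}^{(q-1)z}(\mathrm{e}^z-1)=\mathrm{e}^{qz}-\mathrm{e}^{(q-1)z}\leq \mathrm{e}^{qz}-1.
$$
This step linearises the $q$-th power of $\mathrm{e}^z-1$ and converts the integrand $(\mathrm{e}^{\lambda|u|^p}-1)^q$ into the Orlicz-type integrand $\mathrm{e}^{q\lambda|u|^p}-1$.

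Next, I would use the convexity of $z\mapsto \mathrm{e}^z-1$ (which vanishes at $0$) to obtain, for $0\leq a\leq 1$ and $z\geq 0$, the scaling inequality
$$
\mathrm{e}^{az}-1\leq a\bigl(\mathrm{e}^z-1\bigr),
$$
simply by writing $az=a\cdot z+(1-a)\cdot 0$ and applying Jensen. I would then set $a:=\lambda q K^p\in(0,1]$ (allowed by hypothesis) and $z:=|u(x)|^p/K^p$, so that $az=q\lambda|u(x)|^p$. This gives
$$
\int_{\R^N}\bigl(\mathrm{e}^{q\lambda|u(x)|^p}-1\bigr)\,dx\leq \lambda q K^p\int_{\R^N}\Bigl(\mathrm{e}^{|u(x)|^p/K^p}-1\Bigr)\,dx.
$$

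Finally, I would invoke the hypothesis $\|u\|_{\exp L^p}\leq K$ together with the monotonicity of the Luxemburg modular (Proposition \ref{fatou}(iii) applied to $\lambda\mapsto\int\phi(|u|/\lambda)\,dx$, or equivalently the structure of the set in \eqref{med1}) to conclude
$$
\int_{\R^N}\Bigl(\mathrm{e}^{|u(x)|^p/K^p}-1\Bigr)\,dx\leq 1.
$$
Combining the three inequalities yields $\|\mathrm{e}^{\lambda|u|^p}-1\|_q^q\leq \lambda q K^p$, which is exactly the claim. The only genuinely delicate point is justifying the last bound when $K$ may strictly exceed $\|u\|_{\exp L^p}$ — this is handled by the description of the admissible set in \eqref{med1} as a half-line $[\|u\|_{\exp L^p},\infty)$; the other two steps are routine one-line convexity/monotonicity arguments.
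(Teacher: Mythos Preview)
Your proof is correct. Each of the three steps is valid: the pointwise bound $(\mathrm{e}^z-1)^q\leq \mathrm{e}^{qz}-1$ follows exactly as you wrote; the convexity estimate $\mathrm{e}^{az}-1\leq a(\mathrm{e}^z-1)$ for $0\leq a\leq 1$ is standard; and the final modular bound $\int(\mathrm{e}^{|u|^p/K^p}-1)\,dx\leq 1$ is precisely the content of the half-line description preceding \eqref{med1}, so there is no gap there either.

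As for the comparison: the present paper does not supply its own proof of this lemma --- it merely quotes the statement from \cite[Lemma~2.6]{MT-ICM2018}. Your argument is the natural one for this kind of estimate in Orlicz spaces (reduce the $L^q$ norm of $\mathrm{e}^{\lambda|u|^p}-1$ to a single modular integral and then scale inside the exponential by convexity), and it is essentially the proof one would expect to find in the cited reference.
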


%%%%%%%%%%%%%%%%%%%%%%%%%%%%%%%%%%%%%%%%%%%%%%%%%%%%%%%%%%%%%%%%%%%%%%%%%%%%%%%%%%%%%%%%%%%%%%%ù%%%%%%%%%%%%%%%%%%%%%%%%%%%%%%%%%%

%%%%%%%%%%%%%%%%%%%%%%%%%%%%%%%%%%%%%%%%%%%%%%%%%%%%%%%%%%%%%%%%%%%%%%%%%%%%%%%%%%%%%%%%%%%%%%%ù%%%%%%%%%%%%%%%%%%%%%%%%%%%%%%%%%%

\section{Key estimates}

%%%%%%%%%%%%%%%%%%%%%%%%%%%%%%%%%%%%%%%%%%%%%%%%%%%%%%%%%%%%%%%%%%%%%%%%%%%%%%%%%%%%%%%%%%%%%%%ù%%%%%%%%%%%%%%%%%%%%%%%%%%%%%%%%%%
In this section we establish some results needed for the proof of the main theorem. We first recall some basic estimates for the linear heat semigroup ${\rm e}^{t\Delta}.$   The solution of the linear heat equation
\begin{equation*}
\left\{\begin{array}{cc}
\partial_{t} u=\Delta u,\; t>0,\; x\in \R^N,\\
u(0,x)=u_{0}(x),
\end{array}
\right.
\end{equation*}
can be written as a convolution:
\begin{equation*}
    u(t,x)=\bigr(G_t\star u_0\bigl)(x):=\bigr({\rm e}^{t\Delta}u_0\bigl)(x),
\end{equation*}
where
\begin{equation*}
   G_t(x):=G(t,x)=\frac{{\rm e}^{- {|x|^2\over 4t}}}{(4\pi t)^{N\over 2}},\; t>0,\; x\in \R^N,
\end{equation*}
is the heat kernel.  We will  use the $L^r-L^\rho$ estimate as stated in the proposition below.
\begin{proposition}\label{LPLQQ}  For all $1\leq r\leq \rho\leq\infty$, we have
\begin{equation}\label{x}
\|{\rm e}^{t\Delta}\varphi\|_{\rho}\leqslant   t^{-\frac{N}{2}(\frac{1}{r}-\frac{1}{\rho})}\|\varphi\|_{r},\qquad\, t>0,\,\,\; \varphi\in L^r.
\end{equation}
\end{proposition}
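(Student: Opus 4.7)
The statement is the standard heat-semigroup smoothing estimate, and the plan is to derive it directly from Young's convolution inequality applied to the Gaussian kernel. First I would invoke the kernel representation ${\rm e}^{t\Delta}\varphi = G_t \star \varphi$ recalled just above the proposition, so that $\|{\rm e}^{t\Delta}\varphi\|_{\rho} \leq \|G_t\|_{s}\,\|\varphi\|_{r}$ by Young's inequality, where the exponent $s \in [1,\infty]$ is chosen by $1 + \tfrac{1}{\rho} = \tfrac{1}{s} + \tfrac{1}{r}$, i.e.\ $1 - \tfrac{1}{s} = \tfrac{1}{r} - \tfrac{1}{\rho} \in [0,1]$ (which is legitimate since $r \leq \rho$).

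Next I would compute $\|G_t\|_{s}$ explicitly. A direct change of variables $y = x/\sqrt{4t}$ gives, for $1 \leq s < \infty$,
\[
\|G_t\|_{s}^{s} = (4\pi t)^{-sN/2} \int_{\R^N} {\rm e}^{-s|x|^2/(4t)}\,dx = (4\pi t)^{-sN/2}\,\Bigl(\tfrac{4\pi t}{s}\Bigr)^{N/2},
\]
so that
\[
\|G_t\|_{s} = (4\pi)^{-\tfrac{N}{2}(1-\tfrac{1}{s})}\,s^{-\tfrac{N}{2s}}\,t^{-\tfrac{N}{2}(1-\tfrac{1}{s})} = (4\pi)^{-\tfrac{N}{2}(\tfrac{1}{r}-\tfrac{1}{\rho})}\,s^{-\tfrac{N}{2s}}\,t^{-\tfrac{N}{2}(\tfrac{1}{r}-\tfrac{1}{\rho})},
\]
after substituting $1 - \tfrac{1}{s} = \tfrac{1}{r}-\tfrac{1}{\rho}$. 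The endpoint cases $s = \infty$ (when $r = 1$, $\rho = \infty$) and $s = 1$ (when $r = \rho$) are handled by the same formula with the convention $\|G_t\|_{\infty} = (4\pi t)^{-N/2}$, or trivially for $r = \rho$.

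The only remaining point is that the constant in front of $t^{-N(1/r-1/\rho)/2}$ must be at most $1$ to match the form in the statement. Since $4\pi > 1$ and $\tfrac{1}{r} - \tfrac{1}{\rho} \geq 0$, the factor $(4\pi)^{-N(1/r-1/\rho)/2} \leq 1$; similarly $s \geq 1$ forces $s^{-N/(2s)} \leq 1$. Combining these observations with Young's inequality yields the claimed bound with constant $1$. The argument is essentially routine; the only mild subtlety is keeping track of the endpoint exponents and verifying that the numerical prefactor from the Gaussian $L^s$-norm is absorbed into $1$, which is precisely why the statement is free of explicit constants.
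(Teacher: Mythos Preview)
Your argument is correct and is the standard derivation of this smoothing estimate via Young's convolution inequality applied to the explicit Gaussian kernel; the computation of $\|G_t\|_s$ and the verification that the numerical prefactor is at most $1$ are both accurate, including at the endpoints. The paper itself does not supply a proof of this proposition (it is stated as a basic, well-known estimate), so there is nothing to compare against.
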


The following proposition is established in \cite{MT-ICM2018}.

\begin{proposition}(\cite[Proposition 3.2, p. 2392]{MT-ICM2018})\label{pre} Let $1\leqslant q\leqslant p,\,\,1\leqslant r\leqslant\infty.$ Then the following estimates hold:
\begin{enumerate}
  \item [(i)] $\|{\rm e}^{t\Delta}\varphi\|_{\exp L^p}\leqslant  \|\varphi\|_{\exp L^p},\;\; t>0,$\;\; $\varphi\in \exp L^p.$
  \item [(ii)] $\|{\rm e}^{t\Delta}\varphi\|_{\exp L^p}\leqslant  \,t^{-\frac{N}{2q}}\left(\log(t^{-\frac{N}{2}}+1)\right)
    ^{-\frac{1}{p}} \|\varphi\|_{q},\;\;\; t>0,$\;\;\; $\varphi\in L^q.$
  \item [(iii)] $ \|{\rm e}^{t\Delta}\varphi\|_{\exp L^p}\leqslant  \frac{1}{\left(\log 2\right)^{{1\over p}}}\,\left[\,t^{-\frac{N}{2r}}\|\varphi\|_{r}+\|\varphi\|_{q}\right], \;\;\; t>0,\;\;\;$ $\varphi\in L^r\cap L^q.$
\end{enumerate}
\end{proposition}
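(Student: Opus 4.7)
The plan is to treat the three assertions separately: (i) follows from a convexity/Jensen argument using that $G_t$ is a probability density, (iii) is an immediate consequence of the embedding \eqref{embed} combined with the smoothing \eqref{x}, and (ii) is the main content, which I would obtain by Taylor-expanding the Luxemburg modular and summing term-by-term using \eqref{x}.

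For (i), I would exploit that $G_t\geq 0$ and $\int_{\R^N} G_t=1$. Since $\phi(s)={\rm e}^{s^p}-1$ is convex and increasing, Jensen's inequality yields pointwise
$$\phi\!\left(\frac{|({\rm e}^{t\Delta}\varphi)(x)|}{\lambda}\right)\leq \phi\!\left(\frac{(G_t\star|\varphi|)(x)}{\lambda}\right)\leq \Big(G_t\star\phi(|\varphi|/\lambda)\Big)(x).$$
Integrating in $x$ and applying Fubini gives $\int_{\R^N}\phi(|{\rm e}^{t\Delta}\varphi|/\lambda)\,dx\leq \int_{\R^N}\phi(|\varphi|/\lambda)\,dx$, so every $\lambda$ admissible for $\varphi$ is admissible for ${\rm e}^{t\Delta}\varphi$; taking infima gives (i). For (iii), I would apply the embedding \eqref{embed} to $u={\rm e}^{t\Delta}\varphi$ and combine with two consequences of \eqref{x}: the contraction $\|{\rm e}^{t\Delta}\varphi\|_q\leq \|\varphi\|_q$ (the case $r=\rho=q$) and the smoothing $\|{\rm e}^{t\Delta}\varphi\|_\infty\leq t^{-N/(2r)}\|\varphi\|_r$ (the case $\rho=\infty$).

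The real work is in (ii). Setting $M:=\|\varphi\|_q$ and $\lambda>0$ to be chosen, I would Taylor-expand the modular, exchange sum and integral by monotone convergence, and apply \eqref{x} at each level $\rho=pk$; this is legitimate because $p\geq q$ forces $pk\geq q$ for every $k\geq 1$, so that $\|{\rm e}^{t\Delta}\varphi\|_{pk}^{pk}\leq t^{-N(pk-q)/(2q)}M^{pk}$. Then
\begin{align*}
\int_{\R^N}\!\Big({\rm e}^{|({\rm e}^{t\Delta}\varphi)(x)|^p/\lambda^p}-1\Big)\,dx
&=\sum_{k=1}^\infty \frac{\|{\rm e}^{t\Delta}\varphi\|_{pk}^{pk}}{k!\,\lambda^{pk}}
\leq \sum_{k=1}^\infty \frac{t^{-N(pk-q)/(2q)}M^{pk}}{k!\,\lambda^{pk}}\\
&=t^{N/2}\Big({\rm e}^{M^p t^{-Np/(2q)}/\lambda^p}-1\Big).
\end{align*}
Requiring this to be $\leq 1$ is equivalent to $\lambda^p\geq M^p t^{-Np/(2q)}/\log(1+t^{-N/2})$; taking the infimum over admissible $\lambda$ yields the bound in (ii).

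The only delicate point is the interchange of summation and integration in the modular computation, which I would justify by monotone convergence applied to the partial sums of the series for ${\rm e}^{s^p}-1$. Once that is in place, everything reduces to bookkeeping with the exponents produced by \eqref{x}; the cancellation that produces exactly the factor $t^{N/2}$ and the clean logarithmic loss $(\log(1+t^{-N/2}))^{-1/p}$ is the pleasant consequence of having chosen to expand at levels $\rho=pk$.
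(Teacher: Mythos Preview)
Your proof is correct. The present paper does not actually give its own proof of this proposition---it is quoted from \cite{MT-ICM2018} without argument---so there is no in-paper proof to compare against directly. That said, the series-expansion technique you use for (ii) (expand the modular as $\sum_{k\geq 1}\|{\rm e}^{t\Delta}\varphi\|_{pk}^{pk}/(k!\,\lambda^{pk})$, apply \eqref{x} termwise, and resum) is exactly the method the paper deploys in the proof of Corollary~\ref{forn=8}, and your treatments of (i) via Jensen's inequality and (iii) via \eqref{embed} plus \eqref{x} are the standard arguments. Your approach is therefore fully consistent with the paper's methodology.
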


We also recall the following from   \cite{MT-ICM2018}.
\begin{corollary}(\cite[Corollary 3.3, p. 2394]{MT-ICM2018})
\label{lemme estimates}
Let $p>1,\; N> {2p\over p-1}, \; r> {N\over 2}.$ Then, for every $g\in L^1\cap L^r,$ we have
 \begin{equation*}
    \|{\rm e}^{t\Delta}g\|_{\exp L^p}\leq \kappa(t)\,\|g\|_{L^1\cap L^r},\;\;\;\; t>0,
\end{equation*}
where $\kappa\in L^1(0,\infty)$ is given by
$$\kappa(t)=\frac{1}{\left(\log 2\right)^{{1\over p}}}\min\biggr\{ t^{-\frac{N}{2r}}+1,\; t^{-\frac{N}{2}}\Big(\log(t^{-\frac{N}{2}}+1)\Big)^{-\frac{1}{p}}\biggl\}.$$
\end{corollary}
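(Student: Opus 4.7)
The plan is to derive the bound by simply combining two of the three estimates in Proposition \ref{pre} (with $q=1$, which is admissible since $p>1$) and then to verify the integrability of $\kappa$ by a short asymptotic analysis at $0$ and at $\infty$.

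First I would apply Proposition \ref{pre}(iii) with $q=1$ and the given $r$: since $\|g\|_1,\|g\|_r\le \|g\|_{L^1\cap L^r}$, this gives
\[
\|{\rm e}^{t\Delta}g\|_{\exp L^p}\;\le\;\frac{1}{(\log 2)^{1/p}}\bigl(t^{-N/(2r)}+1\bigr)\,\|g\|_{L^1\cap L^r}.
\]
Next I would apply Proposition \ref{pre}(ii) with $q=1$, yielding
\[
\|{\rm e}^{t\Delta}g\|_{\exp L^p}\;\le\; t^{-N/2}\bigl(\log(t^{-N/2}+1)\bigr)^{-1/p}\,\|g\|_{1}\;\le\;\frac{1}{(\log 2)^{1/p}}\,t^{-N/2}\bigl(\log(t^{-N/2}+1)\bigr)^{-1/p}\,\|g\|_{L^1\cap L^r},
\]
where the harmless factor $(\log 2)^{-1/p}>1$ is inserted so that both bounds share the same prefactor. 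Taking the minimum of the two bounds produces exactly $\kappa(t)\|g\|_{L^1\cap L^r}$.

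It then remains to check that $\kappa\in L^1(0,\infty)$, which I would do by splitting the integral at $t=1$ and choosing the appropriate branch of the $\min$ on each piece. On $(0,1)$ I would bound $\kappa$ by the first expression $(\log 2)^{-1/p}(t^{-N/(2r)}+1)$; the hypothesis $r>N/2$ gives $N/(2r)<1$, which is exactly the condition for $t^{-N/(2r)}$ to be integrable near $0$. On $(1,\infty)$ I would use the second expression. The main (mildly delicate) computation is the asymptotic as $t\to\infty$: since $\log(1+x)\sim x$ as $x\to 0$, we have $\log(t^{-N/2}+1)\sim t^{-N/2}$, so
\[
t^{-N/2}\bigl(\log(t^{-N/2}+1)\bigr)^{-1/p}\;\sim\; t^{-N/2}\cdot t^{N/(2p)}\;=\;t^{-N(p-1)/(2p)}.
\]
This is integrable at $\infty$ precisely when $N(p-1)/(2p)>1$, i.e.\ $N>2p/(p-1)$, which is the standing hypothesis (and also the point where $p>1$ is genuinely needed, since otherwise $2p/(p-1)=\infty$).

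The statement and its verification are mostly bookkeeping; the only place where care is required is the $t\to\infty$ asymptotic above, which is what forces the dimensional assumption $N>2p/(p-1)$. Everything else is immediate from Proposition \ref{pre} and the elementary inequality $\|g\|_q\le \|g\|_{L^1\cap L^r}$ for $q\in\{1,r\}$.
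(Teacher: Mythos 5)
Your proof is correct and follows essentially the route the paper intends: the corollary is recalled from \cite{MT-ICM2018} without proof here, but it is exactly the combination of Proposition \ref{pre}(ii) and (iii) with $q=1$ followed by the integrability check at $0$ (using $r>N/2$) and at $\infty$ (using $N>2p/(p-1)$), which is also the structure of the paper's own proof of the analogous Corollary \ref{forn=8}. No gaps; the insertion of the harmless factor $(\log 2)^{-1/p}>1$ in the second branch is a legitimate way to match the stated form of $\kappa$.
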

Here we use $\|g\|_{L^1\cap L^r}=\|g\|_{1}+\|g\|_{r}.$

For $N=2p/(p-1)$ we have a similar result. For this we need to introduce an appropriate Orlicz space. Let $\phi(s):={\rm e}^{s^p}-1-s^p,\; s\geq 0$ and $L^{\phi}$ be the associated Orlicz space with the Luxemburg norm \eqref{Luxemb}. From the definition, we have
\begin{equation}\label{slim}
    C_1\|u\|_{\exp L^p}\leqslant\|u\|_{p}+\|u\|_{L^{\phi}}\leqslant C_2\|u\|_{\exp L^p},
\end{equation}
for some $C_1,\,C_2>0.$
\begin{corollary}
\label{forn=8}
Let $p>1,\;  r> {N\over 2}= {p\over p-1}.$ For every $g\in L^1\cap L^{2p}\cap L^r,$ we have
 \begin{equation}\label{2.8b}
    \|{\rm e}^{t\Delta}g\|_{L^\phi}\leq \zeta(t)\|g\|_{L^1\cap L^{2p}\cap L^r},\;\;\;\; t>0,
\end{equation}
where $\zeta\in L^1(0,\infty)$ is given by
$$
\zeta(t)=\frac{1}{(\log 2)^{1/p}}\,\min\biggr\{1+t^{-\frac{N}{2r}},\; t^{-{p\over p-1}}\Big(\log(t^{-{p\over p-1}}+1)\Big)^{-\frac{1}{2p}}\biggl\}.
$$
\end{corollary}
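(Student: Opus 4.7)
The plan is to mimic the proof structure of Corollary \ref{lemme estimates}, adapted to the Orlicz space $L^\phi$ associated with $\phi(s)={\rm e}^{s^p}-1-s^p$. The key observation is that the Taylor expansion of $\phi$ starts at degree $2p$, so $L^\phi$ is naturally controlled by $L^{2p}\cap L^\infty$ (instead of $L^p\cap L^\infty$), which explains the presence of the $L^{2p}$ norm in the hypothesis.

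The first step is to establish an analogue of \eqref{embed}, namely the embedding $L^{2p}\cap L^\infty\hookrightarrow L^\phi$ with
\begin{equation*}
\|u\|_{L^\phi}\leq \frac{C}{(\log 2)^{1/p}}\bigl(\|u\|_{2p}+\|u\|_\infty\bigr).
\end{equation*}
For this, one chooses $\lambda$ just large enough so that $|u|/\lambda\leq (\log 2)^{1/p}$ a.e.\ and $\lambda^{2p}\geq\|u\|_{2p}^{2p}$, then uses the elementary inequality ${\rm e}^x-1-x\leq x^2$ on $[0,\log 2]$ to obtain $\int\phi(|u|/\lambda)\,dx\leq\|u\|_{2p}^{2p}/\lambda^{2p}\leq 1$.

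For the first bound in the minimum defining $\zeta(t)$, I apply this embedding to $u={\rm e}^{t\Delta}g$ combined with Proposition \ref{LPLQQ}: $\|{\rm e}^{t\Delta}g\|_{2p}\leq \|g\|_{2p}$ and $\|{\rm e}^{t\Delta}g\|_\infty\leq t^{-N/(2r)}\|g\|_r$, giving the $1+t^{-N/(2r)}$ estimate directly. For the second bound, I expand termwise,
\begin{equation*}
\int \phi(|{\rm e}^{t\Delta}g|/\lambda)\,dx = \sum_{k\geq 2}\frac{\|{\rm e}^{t\Delta}g\|_{pk}^{pk}}{k!\,\lambda^{pk}} \leq t^{N/2}\sum_{k\geq 2}\frac{\mu^k}{k!} = t^{N/2}({\rm e}^\mu-1-\mu),
\end{equation*}
with $\mu=(t^{-N/2}\|g\|_1/\lambda)^p$, using $\|{\rm e}^{t\Delta}g\|_{pk}\leq t^{-(N/2)(1-1/(pk))}\|g\|_1$. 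Setting $\lambda = c\,t^{-N/2}L^{-1/(2p)}\|g\|_1$ with $L=\log(t^{-N/2}+1)$ makes $\mu=c^{-p}L^{1/2}$, and for $c$ large enough the inequality ${\rm e}^\mu-1-\mu\leq t^{-N/2}={\rm e}^L-1$ is verified in both regimes: when $L\geq 1$ (small $t$) using $\mu\leq L$ and monotonicity, and when $L\leq 1$ (large $t$) using ${\rm e}^\mu-1-\mu\leq \mu^2{\rm e}/2\lesssim L\lesssim {\rm e}^L-1$.

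Finally, taking the minimum of the two bounds produces $\zeta(t)$. For $L^1(0,\infty)$-integrability, near $0$ the minimum behaves like $t^{-N/(2r)}$, which is integrable since $r>N/2$; near infinity, like $t^{-(N/2)(1-1/(2p))}=t^{-(2p-1)/(2(p-1))}$, whose exponent exceeds $1$ (using $N/2=p/(p-1)$). The main obstacle I anticipate is the careful bookkeeping in the third step — choosing the constant $c$ and matching the expression $t^{-N/2}(\log(t^{-N/2}+1))^{-1/(2p)}$ to the stated form with the $(\log 2)^{-1/p}$ prefactor — rather than any conceptual difficulty, since the supporting linear estimates and Taylor-expansion technology are already in hand from the previous sections.
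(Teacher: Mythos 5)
Your proposal is correct and follows essentially the same route as the paper: the $\|g\|_1$-bound via termwise expansion of the modular $\sum_{k\geq 2}\|{\rm e}^{t\Delta}g\|_{pk}^{pk}/(k!\lambda^{pk})$ with the $L^1\to L^{pk}$ smoothing estimate, the other bound via the embedding $L^{2p}\cap L^\infty\hookrightarrow L^\phi$ combined with Proposition \ref{LPLQQ}, and the same integrability check at $0$ and $\infty$. The only (harmless) variation is in extracting the Luxemburg norm from the modular bound: the paper uses the inequality ${\rm e}^s-1-s\leq {\rm e}^{s^2}-1$ to reduce to an exactly solvable infimum, whereas you make an explicit choice of $\lambda$ and verify the two regimes $L\gtrless 1$ by hand; both yield the stated $\zeta(t)$ up to a constant absorbed in the $(\log 2)^{-1/p}$ prefactor.
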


\begin{proof} We have, using Proposition \ref{LPLQQ},
\begin{eqnarray*}
  \int_{\R^N}\phi\left(\frac{\left|{\rm e}^{t\Delta}g\right|}{\alpha}\right)\,dx &=&
\sum_{k\geq2}\frac{\|{\rm e}^{t\Delta}g\|^{pk}_{{pk}}}{\alpha^{pk}k!} \\
   &\leq& \sum_{k\geq2}\frac{t^{-{N\over 2}(1-\frac{1}{pk}){pk}}\|g\|_{1}^{pk}}{\alpha^{pk}k!}\\
   &=& \sum_{k\geq2}\frac{t^{-{p\over p-1}(1-\frac{1}{pk}){pk}}\|g\|_{1}^{pk}}{\alpha^{pk}k!}\\
   &=& t^{{p\over p-1}}\phi\left(\frac{t^{-{p\over p-1}}\|g\|_{1}}{\alpha}\right) \\
   &\leq& t^{{p\over p-1}}\left(\exp\Big\{\left(\frac{t^{-{p\over p-1}}\|g\|_{1}}{\alpha}\right)^{2p}\Big\}-1\right),
\end{eqnarray*}
where we have used ${\rm e}^s-1-s\leq {\rm e}^{s^2}-1$ for every $s\geq 0$.
Therefore we obtain that
\begin{eqnarray}\label{2.13}
  \|{\rm e}^{t\Delta}g\|_{L^{\phi}} &\leq& \inf\left\{\alpha>0,
 t^{{p\over p-1}}\left(\exp\Big\{\left(\frac{t^{-{p\over p-1}}\|g\|_{1}}{\alpha}\right)^{2p}\Big\}-1\right)\leq1 \right\} \nonumber\\
   &=& \,t^{-{p\over p-1}}\Big(\log\left(t^{-{p\over p-1}}+1\right)\Big)^{-\frac{1}{2p}} \|g\|_{1}.
\end{eqnarray}
On the other hand, from the embedding $L^{2p}\cap L^{\infty}\hookrightarrow L^{\phi}$, we see that
\begin{equation*}
\begin{split}
\|{\rm e}^{t\Delta}g\|_{L^{\phi}}&\leq\,\frac{1}{({\log 2})^{1/p}}\Big[\|{\rm e}^{t\Delta}g\|_{{\infty}}+\|{\rm e}^{t\Delta}g\|_{{2p}}\Big].
\end{split}
\end{equation*}
Using Proposition \ref{LPLQQ}, and let $r>{N\over 2}={p\over p-1}$ we obtain that
\begin{equation}\label{2.14}
\begin{split}
\|{\rm e}^{t\Delta}g\|_{L^{\phi}}&\leq\, \frac{1}{({\log 2})^{1/p}}\, \Big[ t^{-\frac{N}{2r}}\|g\|_{{r}}+ \|g\|_{{2p}}\Big].
\end{split}
\end{equation}
Combining the inequalities \eqref{2.13} and \eqref{2.14}, we have
\begin{equation*}
   \|{\rm e}^{t\Delta}g\|_{L^{\phi}}\leq \zeta(t)\|g\|_{L^1\cap L^{2p}\cap L^r}.
\end{equation*}
Since $\frac{N}{2r}<1$ and ${p\over p-1}-{p\over p-1}{1\over 2p}={2p-1\over 2(p-1)}>1,$ we have  that $\zeta \in L^{1}(0,\infty)$.
\end{proof}

%%%%%%%%%%%%%%%%%%%%%%%%%%%%%%%%%%%%%%%%%%%%%%%%%%%%%%%%%%%%%%%%%%%%%%%%%%%%%%%%%%%%%%%%%%%%%%%ù%%%%%%%%%%%%%%%%%%%%%%%%%%%%%%%%%%
\section{Proof of the Main Result}

%%%%%%%%%%%%%%%%%%%%%%%%%%%%%%%%%%%%%%%%%%%%%%%%%%%%%%%%%%%%%%%%%%%%%%%%%%%%%%%%%%%%%%%%%%%%%%%ù%%%%%%%%%%%%%%%%%%%%%%%%%%%%%%%%%%

In this section we give the proof of Theorem \ref{GE}. We consider the associated integral equation
 \begin{equation}\label{NN9}
   u(t)= {\rm e}^{t\Delta}u_{0}+\int_{0}^{t}{\rm e}^{(t-s)\Delta}(f(u(s)))ds,
\end{equation}
 where  $\|u_0\|_{\EP}\leq \varepsilon$, with  small $\varepsilon>0$ to be fixed later. The  nonlinearity $f$ satisfies $f(0)=0$ and
\begin{equation}\label{NNN9}
|f(u)-f(v)|\leq C \left|u-v\right|\bigg(|u|^{m-1}{\rm e}^{\lambda |u|^p}+|v|^{m-1}{\rm e}^{\lambda |v|^p}\bigg),
\end{equation}
for some constants $C>0$ and $\lambda>0$. Here  $p>1$ and $m$ is larger than $1+{2p\over N}.$  From \eqref{NNN9}, we  deduce that
\begin{equation}
\label{taylorm}
 |f(u)-f(v)|\leq C|u-v|\sum_{k=0}^{\infty} \frac{\lambda^{k}}{k!} \bigg(|u|^{pk+m-1}+|v|^{pk+m-1}\bigg).
\end{equation}
We will perform a fixed point argument on a suitable metric space.  For $M>0$ we  introduce the space
$$Y_M :=\left\{u\in L^\infty(0,\infty, \EP);\;\displaystyle\sup_{t>0}  t^{\sigma}\|u(t)\|_{a}+\|u\|_{L^{\infty}(0,\infty;\EP)}\leq M\right\},$$ where
$$a>{N(m-1)\over 2}\geq p\; \mbox{ and }\; \sigma={1\over m-1}-\frac{N}{2a}={N\over 2}\left({2\over N(m-1)}-{1\over a}\right)>0.$$ It follows by Proposition \ref{fatou} that endowed with the metric $$d(u,v)=\displaystyle\sup_{t>0} \Big(t^{\sigma}\|u(t)-v(t)\|_{a}\Big),$$ $Y_M$ is a complete metric space.
For $u\in Y_M,$ we define $\Phi(u)$ by
\begin{equation}\label{Phi}
    \Phi(u)(t):={\rm e}^{t\Delta}u_{0}+\int_{0}^{t}{\rm e}^{(t-s)\Delta}(f(u(s))) ds.
\end{equation}

By Propositions \ref{pre} (i), \ref{LPLQQ} and Lemma \ref{sarah55}, we have
\begin{equation}
\label{lioum}
\|{\rm e}^{t\Delta}u_{0}\|_{\EP}\leq \|u_{0}\|_{\EP},
\end{equation}
and
\begin{eqnarray}
\nonumber
t^\sigma \|{\rm e}^{t\Delta}u_{0}\|_a&\leq & t^\sigma t^{-{N\over 2}\left({2\over N(m-1)}-{1\over a}\right)}\|u_{0}\|_{{N(m-1)\over 2}}
\\ \label{med3} &= & \|u_{0}\|_{{N(m-1)\over 2}}\leq C\|u_{0}\|_{\EP},
\end{eqnarray}
where we have used $1<p\leq {N(m-1)\over 2}< a.$

To estimate $\displaystyle\int_{0}^{t}{\rm e}^{(t-s)\Delta}(f(u(s)))$, we use the results and estimates of the previous Sections 2 and 3. We treat separately the cases $N > 2p/(p-1),\; N = 2p/(p-1)$ and $N < 2p/(p-1).$ The proof is
done using similar argument  as that in \cite{Ioku, MOT, NO1998}.

%%%%%%%%%%%%%%%%%%%%%%%%%%%%%%%%%%%%%%%%%%%%%%%%%%%%%%%%%%%%%%%%%%%%%%%%%%%%%%%%%%%%%%%%%%%%%%%ù%%%%%%%%%%%%%%%%%%%%%%%%%%%%%%%%%%
\subsection{The case $N>2p/(p-1)$}
%%%%%%%%%%%%%%%%%%%%%%%%%%%%%%%%%%%%%%%%%%%%%%%%%%%%%%%%%%%%%%%%%%%%%%%%%%%%%%%%%%%%%%%%%%%%%%%ù%%%%%%%%%%%%%%%%%%%%%%%%%%%%%%%%%%
We first recall the following lemma established in \cite[Lemma 2.7, p. 2387]{MT-ICM2018}. Here we remove the condition $a>N/2$ which, in fact is not needed.
\begin{lemma}
\label{params}
Let $m\geq p>1$, $a>\frac{N(m-1)}{2}$. Define
$
\sigma=\frac{1}{m-1}-\frac{N}{2a}>0.
$
Assume that
$
N>\frac{2p}{p-1},
$
$
\label{Aa}
a<\frac{N(m-1)}{2}\frac{1}{(2-m)_+}\,.
$
Then, there exist $r,\;\;q,\;\; (\theta_k)_{k=0}^\infty\;\;, (\rho_k)_{k=0}^\infty$ such that
$$
1\leq r\leq a,\quad
q\geq 1\quad \mbox{and}\quad \frac{1}{r}=\frac{1}{a}+\frac{1}{q}\,.
$$
$$
0<\theta_k<1\quad\mbox{and}\quad \frac{1}{q(pk+m-1)}=\frac{\theta_k}{a}+\frac{1-\theta_k}{\rho_k}\,.
$$
$$
 p\leq \rho_k<\infty, \quad
\frac{N}{2}\left(\frac{1}{r}-\frac{1}{a}\right)<1, \quad
 \sigma\Big[ \theta_k(pk+m-1)+1\Big]<1\,.
$$
$$
1-\frac{N}{2}\left(\frac{1}{r}-\frac{1}{a}\right)-\sigma\theta_k(pk+m-1)=0\,.
$$
Moreover,
\begin{equation}
\label{behav1}
\theta_k\longrightarrow 0\quad\mbox{as}\quad k\longrightarrow\infty.
\end{equation}
\begin{equation}
\label{behav2}
\rho_k\longrightarrow \infty\quad\mbox{as}\quad k\longrightarrow\infty.
\end{equation}
\begin{equation}
\label{behav3}
\frac{(pk+m-1)(1-\theta_k)}{p\rho_k}\,(1+\rho_k)\leq k,\;\;\;\;\;k\geq 1.
\end{equation}
\end{lemma}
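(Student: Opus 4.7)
I would treat $q$ (equivalently $r$, via $1/r = 1/a + 1/q$) as the single free scalar parameter; the remaining data $\theta_k$ and $\rho_k$ are then determined by the two equality conditions of the lemma. From $1 - (N/2)(1/r - 1/a) = \sigma\theta_k(pk+m-1)$ combined with $1/r - 1/a = 1/q$ one reads off
\[
\theta_k = \frac{1-N/(2q)}{\sigma(pk+m-1)},
\]
so the product $\theta_k(pk+m-1) \equiv T := (1-N/(2q))/\sigma$ is $k$-independent. Solving $1/(q(pk+m-1)) = \theta_k/a + (1-\theta_k)/\rho_k$ for $\rho_k$ then gives
\[
\rho_k = \frac{aq(pk+m-1)(1-\theta_k)}{a - qT}.
\]
From these explicit formulas, \eqref{behav1} and \eqref{behav2} are immediate: $\theta_k = O(1/k) \to 0$ and $\rho_k$ grows linearly in $k$.

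The admissible range for $q$ is extracted as follows. Positivity of $\theta_k$ requires $q > N/2$ (using $\sigma > 0$, which follows from $a > N(m-1)/2$). Substituting the value of $\theta_k(pk+m-1)$ into $\sigma[\theta_k(pk+m-1)+1] < 1$ reduces it to $q < N/(2\sigma)$, and the window $(N/2, N/(2\sigma))$ is nonempty precisely when $\sigma < 1$. I would verify that $\sigma < 1$ is equivalent to the hypothesis $a < N(m-1)/(2(2-m)_+)$ by a case split on $m$: for $m \geq 2$ this is trivial since $1/(m-1) - 1 \leq 0 < N/(2a)$, and for $1 < m < 2$ the inequality unfolds algebraically to exactly $a < N(m-1)/(2(2-m))$. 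The side conditions $1 \leq r \leq a$, $q \geq 1$ and $(N/2)(1/r-1/a) < 1$ reduce to $q > N/2 > p/(p-1) > 1$ (from $N > 2p/(p-1)$) together with $1/a + 1/q \leq 1$, which is inherited from $a > p > 1$.

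The main obstacle is verifying \eqref{behav3}. Here I would exploit the $k$-independence of $T$: a direct computation collapses the left-hand side to
\[
\frac{(pk+m-1)(1-\theta_k)(1+\rho_k)}{p\rho_k} = k + \frac{m-1}{p} + \frac{1}{pq} - \frac{T(a+1)}{ap},
\]
so \eqref{behav3} is equivalent to the clean scalar inequality $q \geq a/(m-1)$. This is in delicate tension with the strict inequality $q < a/(m-1)$ needed to keep the denominator $a-qT$ in $\rho_k$ strictly positive. The decisive step is therefore to take $q$ at (or infinitesimally below) the critical value $a/(m-1)$; at the critical choice the formulas simplify to $\theta_k = (m-1)/(pk+m-1) \in (0,1)$, \eqref{behav3} becomes an equality, and the admissibility $N/2 < a/(m-1) < N/(2\sigma)$ is where the full strength of the hypothesis $a < N(m-1)/(2(2-m)_+)$ is used. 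Finally the bound $p \leq \rho_k$ is checked by monotonicity of $k \mapsto \rho_k$, reducing to the $k=1$ case, which is controlled by the hypotheses on $a$ and $N$.
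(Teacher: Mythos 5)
Your reduction is the right one and, as far as it goes, it is correct: the equality $1-\frac{N}{2}(\frac1r-\frac1a)-\sigma\theta_k(pk+m-1)=0$ together with $\frac1r=\frac1a+\frac1q$ forces $T:=\theta_k(pk+m-1)=\frac{1-N/(2q)}{\sigma}$ to be independent of $k$, the interpolation identity then pins $\rho_k=\frac{aq(pk+m-1)(1-\theta_k)}{a-qT}$, and your collapse of the left-hand side of \eqref{behav3} to $k+\frac{m-1}{p}+\frac{1}{pq}-\frac{T(a+1)}{ap}$ is exact, so \eqref{behav3} is indeed equivalent to $q\ge \frac{a}{m-1}$. (There is nothing in this paper to compare your route against: the lemma is imported without proof from \cite[Lemma 2.7]{MT-ICM2018}.)

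The gap is in your ``decisive step''. The conditions $0<\theta_0<1$ and $\rho_k<\infty$ are each equivalent to the strict inequality $T<m-1$, i.e.\ $q<\frac{a}{m-1}$. At the critical value $q=\frac{a}{m-1}$ one gets $\theta_0=1$ and $a-qT=0$, hence $\rho_k=+\infty$ for every $k$; moreover $\sigma\big[\theta_0(m-1)+1\big]=\sigma m<1$ is equivalent to $a<\frac{Nm(m-1)}{2}$, which does \emph{not} follow from $a<\frac{N(m-1)}{2(2-m)_+}$ (for $1<m<2$ one has $\frac{N(m-1)}{2(2-m)}>\frac{Nm(m-1)}{2}$ since $(m-1)^2>0$, and for $m\ge 2$ the hypothesis is vacuous). ``Infinitesimally below'' is not a choice of $q$: for every $q<\frac{a}{m-1}$ your own formula gives left-hand side $=k+\varepsilon(q)$ with $\varepsilon(q)>0$, so \eqref{behav3} fails. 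In other words, your computation actually shows that \eqref{behav3} with the exact bound $k$ is incompatible with $0<\theta_0<1$ and $p\le\rho_k<\infty$, so no value of $q$ closes the argument, and the proposal does not prove the statement as written. What does hold for every $q$ in the open window $\big(\frac N2,\min(\frac{a}{m-1},\frac{N}{2\sigma})\big)$, and what the application \eqref{4.21b} actually needs, is the weaker bound $\frac{(pk+m-1)(1-\theta_k)}{p\rho_k}(1+\rho_k)\le k+C$ with $C$ independent of $k$ (the extra factor $(\rho_k/p)^{C}$ is at most $C'^k$ and is absorbed into $C^kk!$); you should either prove that weakened form and flag the discrepancy with \eqref{behav3}, or explain how the strict constraints are to be relaxed. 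A minor further point: the minimum of $k\mapsto\rho_k$ is at $k=0$, not $k=1$, where one computes $\rho_0=\frac{N(m-1)}{2}>p$ using $N>\frac{2p}{p-1}$ and $m\ge p$.
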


We now turn to the proof of the theorem in the case $N>2p/(p-1).$ {This case is done in \cite{MT-ICM2018}. For completeness we give the proof here.} Let $u\in Y_M$. Using Proposition \ref{pre} and Corollary \ref{lemme estimates}, we get for $q>N/2$,
\begin{eqnarray*}
\|\Phi(u)(t)\|_{\EP}&\leq&\|{\rm e}^{t\Delta}u_{0}\|_{\EP}+\int_{0}^{t}\left\|{\rm e}^{(t-s)\Delta}
(f(u(s))) \right\|_{\EP}\,ds\\
&\leq&\|{\rm e}^{t\Delta}u_{0}\|_{\EP}+\int_{0}^{t}\kappa(t-s)\bigg(\|f(u(s))\|_{L^1\cap L^q}
\bigg)\,ds\\
&\leq&\|{\rm e}^{t\Delta}u_{0}\|_{\EP}+ \|f(u)\|_{L^{\infty}(0,\infty;(L^1\cap L^q))}\int_{0}^{\infty}\kappa(s)\,ds\\
&\leq& \|{\rm e}^{t\Delta}u_{0}\|_{\EP}+C \|f(u)\|_{L^{\infty}(0,\infty;(L^1\cap L^q))}.
\end{eqnarray*}
Hence,
$$
\|\Phi(u)\|_{L^\infty(0{,\infty; \EP)}}\leq \|u_{0}\|_{\EP}+ C \|f(u)\|_{L^{\infty}(0,\infty;\,L^1\cap L^q)}.
$$
It remains to estimate the nonlinearity $f(u)$ in $L^r$ for $r=1,\,q.$ To this end, let us remark that
\begin{equation}\label{3.4}
|f(u)|
\leq C|u|^m\left({\rm e}^{\lambda |u|^p}-1\right)+C|u|^m.
\end{equation}
By H\"{o}lder's inequality and Lemma \ref{sarah55}, we have for $1\leq r\leq q$ and since $m\geq p$,
\begin{eqnarray}\label{3.6}
\nonumber
\|f(u)\|_{{r}} &\leq & C\|u\|_{{mr}}^m +C\||u|^m({\rm e}^{\lambda |u|^p}-1)\|_{{r}}\\ & \leq & C\|u\|_{{mr}}^m +C\|u\|_{{2mr}}^m\|{\rm e}^{\lambda |u|^p}-1\|_{{2r}} \\ &\leq& \nonumber C\|u\|_{\EP}^m\bigg(\|{\rm e}^{\lambda |u|^p}-1\|_{{2r}}+1\bigg).
\end{eqnarray}
According to Lemma \ref{med},  and the fact that $u\in Y_M$, we have for $2q\lambda M^p\leq 1$,
\begin{equation}
\label{3.8}
\|f(u)\|_{L^\infty(0,\infty; L^{r})}\leq CM^m.
\end{equation}

 Finally, we obtain
\begin{eqnarray*}
\|\Phi(u)\|_{L^\infty(0,\infty, \EP)}&\leq&  \|u_{0}\|_{\EP}+C M^m\\
&\leq& \varepsilon+CM^m.
\end{eqnarray*}

Let  $u,\,v$ be two elements of $Y_M.$ By using \eqref{taylorm} and Proposition \ref{LPLQQ}, we obtain
\begin{eqnarray*}
 t^{\sigma}\|\Phi(u)(t)-\Phi(v)(t)\|_{a}&\leq&
t^{\sigma}\int_{0}^{t}\left\| {\rm e}^{(t-s)\Delta}
(f(u(s))-f(v(s)))\right\|_{a} ds\\&\leq& t^{\sigma}
\int_{0}^{t}(t-s)^{-{N\over 2}(\frac{1}{r}-\frac{1}{a})}\left\|f(u(s))-f(v(s))\right\|_{r}\,ds\\
&&\hspace{-3cm}\leq C\sum_{k=0}^{\infty}\frac{\lambda^{k}}{k!} t^{\sigma}\int_{0}^{t}(t-s)^{-{N\over 2}(\frac{1}{r}-\frac{1}{a})}
\|(u-v)(|u|^{pk+m-1}+|v|^{pk+m-1})\|_{r}ds,
\end{eqnarray*}
where $1\leq r\leq a.$ We use the H\"{o}lder inequality with ${1\over r}={1\over a}+{1\over q}$ to obtain
\begin{eqnarray*}
 t^{\sigma}\|\Phi(u)(t)-\Phi(v)(t)\|_{a}&\leq& C\sum_{k=0}^{\infty}\frac{\lambda^{k}}{k!} t^{\sigma}\int_{0}^{t}(t-s)^{-{N\over 2}(\frac{1}{r}-\frac{1}{a})}
\|u-v\|_{a}\times\\
&&\||u|^{pk+m-1}+|v|^{pk+m-1}\|_{q}ds,\\
&\leq&  C\sum_{k=0}^{\infty}\frac{\lambda^{k}}{k!} t^{\sigma}\int_{0}^{t}(t-s)^{-{N\over 2}(\frac{1}{r}-\frac{1}{a})}
\|u-v\|_{a}\times\\ &&\left(\|u\|^{pk+m-1}_{{q(pk+m-1)}}+\|v\|^{pk+m-1}_{{q(pk+m-1)}}\right)ds.
\end{eqnarray*}

Using interpolation inequality with $\frac{1}{q(pk+m-1)}=\frac{\theta}{a}+\frac{1-\theta}{\rho},\; p\leq \rho<\infty,$  we find that
\begin{equation*}
\begin{split}
 t^{\sigma}\|\Phi(u)(t)-\Phi(v)(t)\|_{a}\,\,\!\!\!\!
&\leq C\sum_{k=0}^{\infty}\frac{\lambda^{k}}{k!}t^{\sigma}\int_0^t\,(t-s)^{-\frac{N}{2}
(\frac{1}{r}-\frac{1}{a})}\|u-v\|_{a}\\
&\hspace{-5.5cm}\times\biggl(\|u\|^{(pk+m-1)\theta}_{{a}}
  \|u\|^{(pk+m-1)(1-\theta)}_{{\rho}}
  +\|v\|^{(pk+m-1)\theta}_{{a}}\|v\|^{(pk+m-1)(1-\theta)}_{{\rho}}\biggr)\,ds.
\end{split}
\end{equation*}
By Lemma \ref{sarah55}, we obtain
\begin{eqnarray}\label{4.16b}
&& t^{\sigma}\|\Phi(u)(t)-\Phi(v)(t)\|_{a}\nonumber\\
&&\qquad\qquad\leq C\sum_{k=0}^{\infty}\frac{\lambda^{k}}{k!}t^{\sigma}
\int_0^t\,(t-s)^{-\frac{N}{2}(\frac{1}{r}-\frac{1}{a})}\|u-v\|_{a}\Gamma\left(\frac{\rho}{p}+1\right)
^{\frac{(pk+m-1)(1-\theta)}{\rho}}\qquad\qquad\nonumber\\
&&\qquad\times\left(\|u\|^{(pk+m-1)\theta}_{{a}}\|u\|^{(pk+m-1)(1-\theta)}_{\EP}+
\|v\|^{(pk+m-1)\theta}_{{a}}\|v\|^{(pk+m-1)(1-\theta)}_{\EP}\right)\,ds.
\end{eqnarray}
Applying the fact that $u,\; v\in\; Y_M $ in  \eqref{4.16b}, we see that
\begin{eqnarray}\label{4.18sb}
&& t^{\sigma}\|\Phi(u)(t)-\Phi(v)(t)\|_{a}\nonumber\\
&&\qquad\leq Cd(u,v)\sum_{k=0}^{\infty}\frac{\lambda^{k}}{k!}\Gamma\left(\frac{\rho}{p}+1\right)^{\frac{(pk+m-1)(1-\theta)}{\rho}} M^{pk+m-1}
\nonumber\\
&&\qquad \qquad\qquad\times t^{\sigma}\bigg(\int_0^t(t-s)^{-\frac{N}{2}(\frac{1}{r}-\frac{1}{a})}s^{-\sigma(1+(pk+m-1)\theta)}\,ds\bigg)\nonumber\\
&&\qquad\leq Cd(u,v)\,\sum_{k=0}^{\infty}\frac{\lambda^{k}}{k!}\Gamma\left(\frac{\rho}{p}+1\right)^{\frac{(pk+m-1)(1-\theta)}{\rho}} M^{pk+m-1}
\qquad\qquad\qquad\qquad\qquad\nonumber\\
&&\qquad\qquad\qquad\times {\mathcal{B}}\left(1-\frac{N}{2}\left(\frac{1}{r}-\frac{1}{a}\right),1-\sigma\big(1+(pk+m-1)\theta\big)\right),
\end{eqnarray}
where the parameters $a,\,q,\,r,\,\theta=\theta_k,\,\rho=\rho_k$  are given by Lemma \ref{params}.  Remark that $\theta_k$ satisfies
$$
0<\theta_k<\frac{1}{pk+m-1}\min\Big(m-1, \frac{1-\sigma}{\sigma}\Big)\,.
$$
For these parameters, using \eqref{gamma4} and \eqref{gamma1}, we obtain that
\begin{equation}\label{4.20b}
{\mathcal{B}}\left(1-\frac{N}{2}\left(\frac{1}{r}-\frac{1}{a}\right),1-\sigma\big(1+(pk+m-1)\theta\big)\right)\leq C,
\end{equation}
where $C>0$ is a constant independent on $k$. Moreover, using \eqref{behav1}-\eqref{behav2}-\eqref{behav3} together with \eqref{gamma3} and \eqref{gamma2} gives
\begin{equation}\label{4.21b}
   \Gamma\left(\frac{\rho_k}{p}+1\right)^{\frac{(pk+m-1)(1-\theta_k)}{\rho_k}} \leq C^k k !.
\end{equation}
 Combining \eqref{4.18sb}, \eqref{4.20b} and \eqref{4.21b} we get
$$
 t^{\sigma}\|\Phi(u)(t)-\Phi(v)(t)\|_{a}\leq C d(u,v) \sum_{k=0}^{\infty} \,{(C\lambda)^k} M^{pk+m-1}.$$
Hence, we get for $M$ small,
\begin{equation*}\label{4.18sss}
d\left(\Phi(u), \Phi(v)\right)\leq C M^{m-1} d(u,v).
\end{equation*}

The above estimates show that $\Phi : Y_M \to Y_M $ is a contraction mapping for  $\varepsilon$ and $ M$ sufficiently small.  By Banach's fixed point theorem, we thus obtain the existence of a unique
 $u$ in  $Y_M$ with $\Phi(u)=u.$ By \eqref{Phi}, $u$ solves the integral equation \eqref{NN9} with $f$ satisfying \eqref{NNN9}. The estimate \eqref{Linfinibihar} follows from $u\in Y_M.$ This terminates the proof of the existence of a  global solution to \eqref{NN9} for $N>2p/(p-1)$.

%%%%%%%%%%%%%%%%%%%%%%%%%%%%%%%%%%%%%%%%%%%%%%%%%%%%%%%%%%%%%%%%%%%%%%%%%%%%%%%%%%%%%%%%%%%%%%%ù%%%%%%%%%%%%%%%%%%%%%%%%%%%%%%%%%%

%%%%%%%%%%%%%%%%%%%%%%%%%%%%%%%%%%%%%%%%%%%%%%%%%%%%%%%%%%%%%%%%%%%%%%%%%%%%%%%%%%%%%%%%%%%%%%%ù%%%%%%%%%%%%%%%%%%%%%%%%%%%%%%%%%%

\subsection{The case of $N<2p/(p-1)$}

%%%%%%%%%%%%%%%%%%%%%%%%%%%%%%%%%%%%%%%%%%%%%%%%%%%%%%%%%%%%%%%%%%%%%%%%%%%%%%%%%%%%%%%%%%%%%%%ù%%%%%%%%%%%%%%%%%%%%%%%%%%%%%%%%%%
According to \eqref{med3} and \eqref{lioum},  it remains to establish the following two inequalities
\begin{equation}
\label{estim1}
\left\|\int_{0}^{t}{\rm e}^{(t-s)\Delta}(f(u(s))) ds\right\|_{L^{\infty}(0,\infty;\,\EP)}\leq C_1(M),
\end{equation}
and
 \begin{equation}
\label{estim2}
\sup_{t>0}t^{\sigma}\left\|\int_0^t{\rm e}^{(t-s)\Delta}\,(f(u)-f(v))\,ds\right\|_{a}\leq C_2(M)\sup_{s>0}\,\left(s^{\sigma}\|u(s)-v(s)\|_{a}\right),
\end{equation}
 where $u,\;v\in Y_M$ and with $C_1$ and $C_2$ are small when $M$ is small.\\

\noindent{\sc Estimate \eqref{estim1}}.  We have
\begin{equation}\label{log}
    \left(\log\left((t-s)^{-N/2}+1\right)\right)^{-\frac{1}{p}}\leq {2^{1/p}}(t-s)^{\frac{N}{2p}}\quad \mbox{for}\quad 0\leq s< t-\eta^{-\frac{2}{N}},
\end{equation}
{where $\eta=\inf\{\, z\geq 1;\;\; z> 2\log(1+z)\,\}.$   Therefore,} using Proposition \ref{pre} Part (iii), we have, for $r>N/2$ and $0<t\leq \eta^{-2/N}$,
\begin{eqnarray*}
\left\|\int_0^t{\rm e}^{(t-s)\Delta}\,f(u(s))\,ds\right\|_{\EP}&\leq&C\int_0^t\Big((t-s)^{-\frac{N}{2r}}+1\Big)\|f(u(s))\|_{{L^r\cap L^1}}\,ds\\
&\leq&C\sup_{t>0}\|f(u(t))\|_{{L^r\cap L^1}}.
\end{eqnarray*}
For $t\geq \eta^{-2/N}$ and $1\leq q\leq p$, we write
\begin{eqnarray}\label{(4.31)}
&&\qquad\qquad\left\|\int_0^t{\rm e}^{(t-s)\Delta}\,f(u(s))\,ds\right\|_{\EP}\leq\int_0^t\left\|{\rm e}^{(t-s)\Delta}\,f(u(s))\right\|_{\EP}\,ds\nonumber\\
&&\qquad\leq\int_0^{t-\eta^{-\frac{2}{N}}}\,\left\|{\rm e}^{(t-s)\Delta}\,f(u(s))\right\|_{\EP}\,ds+\int_{t-\eta^{-\frac{2}{N}}}^t\,\left\|{\rm e}^{(t-s)\Delta}\,f(u(s))\right\|_{\EP}\,ds\nonumber\\
&&\qquad\leq  \int_0^{t-\eta^{-\frac{2}{N}}}(t-s)^{-\frac{N}{2q}}(\log((t-s)^{-\frac{N}{2}}+1))^{-\frac{1}{p}}\|f(u(s))\|_{q}\,ds
\qquad\qquad\qquad \nonumber\\ \nonumber
&&\qquad \qquad+
\int_{t-\eta^{-\frac{2}{N}}}^t\Big((t-s)^{-\frac{N}{2r}}+1\Big)\|f(u(s))\|_{{L^r\cap L^1}}\,ds\\ \nonumber
&&\qquad\leq C\int_0^t(t-s)^{-\frac{N}{2q}+{\frac{N}{2p}}}\|f(u(s))\|_{q}\,ds+C\sup_{t>0}\|f(u(t))\|_{{L^r\cap L^1}}\nonumber= \textbf{I}+\textbf{J}.
\end{eqnarray}
We first estimate $\textbf{I}$. By \eqref{taylorm} and the fact that $f(0)=0$,  we have
\begin{equation*}
\textbf{I}=\int_0^t(t-s)^{-\frac{N}{2q}+{\frac{N}{2p}}}\|f(u(s))\|_{q}\,ds\leq C\sum_{k=0}^{\infty}\frac{\lambda^k}{k!}\int_0^t(t-s)^{-\frac{N}{2q}+\frac{N}{2p}}\|u\|_{{(pk+m)q}}^{pk+m}\,\,ds.
\end{equation*}
Using interpolation inequality and Lemma \ref{sarah55}, we get
\begin{eqnarray*}
\textbf{I}&\leq&C\sum_{k=0}^{\infty}\frac{\lambda^k}{k!}\int_0^t(t-s)^{-\frac{N}{2q}+\frac{N}{2p}}\|u\|_{a}^{(pk+m)
\theta}\|u\|_{\rho}^{(pk+m)(1-\theta)}\,ds\nonumber\\&\leq&
C\sum_{k=0}^{\infty}\frac{\lambda^k}{k!}\int_0^t(t-s)^{-\frac{N}{2q}+\frac{N}{2p}}\|u\|_{a}^{(pk+m)
\theta}\Gamma\left(\frac{\rho}{p}+1\right)
^{\frac{(pk+m)(1-\theta)}{\rho}}\|u\|_{\EP}^{(pk+m)(1-\theta)}\,ds\nonumber\\
&\leq&C\sum_{k=0}^{\infty}\frac{\lambda^k}{k!}\Gamma\left(\frac{\rho}{p}+1\right)
^{\frac{(pk+m)(1-\theta)}{\rho}}M^{pk+m}\int_0^t(t-s)^{-\frac{N}{2q}+\frac{N}{2p}}s^{-(pk+m)\theta\sigma}\,ds
\nonumber\\
&\leq&C\sum_{k=0}^{\infty}\frac{\lambda^k}{k!}\Gamma\left(\frac{\rho}{p}+1\right)
^{\frac{(pk+m)(1-\theta)}{\rho}}M^{pk+m}t^{1-\frac{N}{2q}+\frac{N}{2p}-(pk+m)\theta\sigma}\\
&&\times {\mathcal{B}}\left(1-\frac{N}{2q}+\frac{N}{2p},1-(pk+m)\theta\sigma\right),
\end{eqnarray*}
where ${\mathcal{B}}$ is the beta function and $\rho,\,\theta,\,q$  satisfy, for all $k,$
\begin{eqnarray*}
% \nonumber to remove numbering (before each equation)
a>{(m-1)p\over p-1},\quad 0\leq \theta=\theta_k\leq1,\quad  1\leq q\leq p,\quad \frac{N}{2q}-{\frac{N}{2p}}<1,\quad && (pk+m)\theta\sigma<1, \\
 \quad 1-\frac{N}{2q}+\frac{N}{2p}-(pk+m)\theta\sigma=0, \quad \frac{1}{(pk+m)q}=\frac{\theta}{a}+\frac{1-\theta}{\rho},\quad &&\;  p\leq \rho=\rho_k<\infty.
\end{eqnarray*}
For any $a>{(m-1)p\over p-1}$, one can choose $$\frac{1-{N(p-1)\over 2p}}{(pk+m)\sigma}<\theta_k<\frac{m-1}{pk+m}.$$ It is obvious that for such $\theta_k$, there exist $q,\,\rho$ such that the above conditions are satisfied. Note that $1-{N(p-1)\over 2p}>0$ in the present case, this gives the supplementary condition on $a.$

Arguing as above, we obtain
\begin{equation}\label{(4.34)}
  {\mathcal{B}}\left(1-\frac{N}{2q}+\frac{N}{2p},1-(pk+m)\theta\sigma\right)={1\over \Gamma\left(1-\frac{N}{2q}+\frac{N}{2p}\right) \Gamma\Big(1-(pk+m)\theta\sigma\Big)}\leq C,
\end{equation}
and
\begin{equation}\label{(4.35)}
  \Gamma\left(\frac{\rho_k}{p}+1\right)^{\frac{(pk+m)(1-\theta_k)}{\rho_k}} \leq C^k k!,
\end{equation}

Combining  \eqref{(4.34)} and \eqref{(4.35)}, we have, for small M,
\begin{equation}\label{(4.36)}
  \textbf{I}\leq C\,M^m.
\end{equation}

To estimate the term $\textbf{J}$, we write
\begin{equation*}
    \|f(u)\|_{\tau}\leq C\||u|^m {\rm e}^{\lambda |u|^p}\|_{\tau}\leq C\||u|^m({\rm e}^{\lambda |u|^p}+1-1)\|_{\tau},
\end{equation*}
with $\tau=r$ or $\tau=1.$ By H\"older inequality, we obtain
\begin{eqnarray*}
   \|f(u)\|_{\tau}&\leq& C\|u\|^m_{{2m\tau}}\|{\rm e}^{\lambda |u|^p}-1\|_{{2\tau}}+\|u(t)\|^m_{{m\tau}}\\
&\leq&C\|u\|^m_{\EP}\left(\|{\rm e}^{\lambda |u|^p}-1\|_{{2\tau}}+1\right),
\end{eqnarray*}
where we have used $m\tau>Nm/2>N(m-1)/2\geq p$ and $m\geq p.$ Now, by Lemma \ref{med}, for $2\tau\lambda M^p\leq 1,$ we have
\begin{equation*}
\|{\rm e}^{\lambda |u|^p}-1)\|_{{2\tau}}\leq (2\tau\lambda M^p)^{\frac{1}{2\tau}}\leq 1.
\end{equation*}
Then we conclude that, for $u\in Y_M,$
\begin{equation*}
   \textbf{J}=C\sup_{t>0}\|f(u(t))\|_{{L^r\cap L^{1}}}\leq C M^m.
\end{equation*}
Finally, we obtain
\begin{eqnarray}\label{(4.991)}
\left\|\int_0^t{\rm e}^{(t-s)\Delta}\,(f(u))\,ds\right\|_{\EP
}&\leq&CM^m.
\end{eqnarray}

{\sc Estimate \eqref{estim2}}. By \eqref{taylorm} and Proposition \ref{LPLQQ}, we have
\begin{equation*}
\begin{split}
% \nonumber to remove numbering (before each equation)
&t^{\sigma}\left\|\int_0^t{\rm e}^{(t-s)\Delta}\,(f(u)-f(v))\,ds\right\|_{a}\\
&\qquad\qquad\qquad\leq C\, \sum_{k=0}^{\infty}\frac{\lambda^k}{k!}t^{\sigma}\int_0^t
(t-s)^{-\frac{N}{2}(\frac{1}{r}-\frac{1}{a})}\|(u-v)(|u|^{pk+m-1}+|v|^{pk+m-1})\|_{r}\,ds.\qquad\qquad
\end{split}
\end{equation*}
Applying the H\"older's inequality, we obtain
\begin{eqnarray*}
&& t^{\sigma}\left\|\int_0^t{\rm e}^{(t-s)\Delta}\,(f(u)-f(v))\,ds\right\|_{a}\\
&&\quad\leq C\, \sum_{k=0}^{\infty}\frac{\lambda^k}{k!}t^{\sigma}\int_0^t(t-s)^{-\frac{N}{2}(\frac{1}{r}-\frac{1}{a})}\|u-v\|_{a} \|(|u|^{pk+m-1}+|v|^{pk+m-1})\|_{q}\,ds\\
&&\quad\leq C\, \sum_{k=0}^{\infty}\frac{\lambda^k}{k!}t^{\sigma}\int_0^t(t-s)^{-\frac{N}{2}
(\frac{1}{r}-\frac{1}{a})}\|u-v\|_{a}\left(\|u\|^{pk+m-1}_{{q(pk+m-1)}}+\|v\|^{pk+m-1}_{{q(pk+m-1)}}\right)\,ds.
\end{eqnarray*}
Using interpolation inequality where $\frac{1}{q(pk+m-1)}=\frac{\theta}{a}+\frac{1-\theta}{\rho},\; p\leq \rho<\infty,$  we have
\begin{equation*}
\begin{split}
t^{\sigma}\left\|\int_0^t{\rm e}^{(t-s)\Delta}\,(f(u)-f(v))\,ds\right\|_{a}\,\,\!\!\!\!
&\leq \sum_{k=0}^{\infty}\frac{\lambda^k}{k!}t^{\sigma}\int_0^t(t-s)^{-\frac{N}{2}
(\frac{1}{r}-\frac{1}{a})}\|u-v\|_{a}\\
&\hspace{-3cm}\times(\|u\|^{(pk+m-1)\theta}_{{a}}
  \|u\|^{(pk+m-1)(1-\theta)}_{{\rho}}
  +\|v\|^{(pk+m-1)\theta}_{{a}}\|v\|^{(pk+m-1)(1-\theta)}_{{\rho}})\,ds.
\end{split}
\end{equation*}
By Lemma \ref{sarah55}, we obtain
\begin{eqnarray}\label{4.16}
&&t^{\sigma}\left\|\int_0^t{\rm e}^{(t-s)\Delta}\,(f(u)-f(v))\,ds\right\|_{a}\nonumber\\
&&\qquad\qquad\leq C\sum_{k=0}^{\infty}\frac{\lambda^k}{k!}t^{\sigma}
\int_0^t(t-s)^{-\frac{N}{2}(\frac{1}{r}-\frac{1}{a})}\|u-v\|_{a}\Gamma\left(\frac{\rho}{p}+1\right)
^{\frac{(pk+m-1)(1-\theta)}{\rho}}\qquad\qquad\nonumber\\
&&\quad\times\left(\|u\|^{(pk+m-1)\theta}_{{a}}\|u\|^{(pk+m-1)(1-\theta)}_{\EP}+
\|v\|^{(pk+m-1)\theta}_{{a}}\|v\|^{(pk+m-1)(1-\theta)}_{\EP}\right)\,ds.
\end{eqnarray}
Applying the fact that $u,\; v\in\; Y_M $ in  \eqref{4.16}, we see that
\begin{eqnarray}\label{4.18s}
&&t^{\sigma}\left\|\int_0^t{\rm e}^{(t-s)\Delta}\,(f(u)-f(v))\,ds\right\|_{a}\nonumber\\
&&\qquad\leq Cd(u,v)\sum_{k=0}^{\infty}\frac{\lambda^k}{k!}\Gamma\left(\frac{\rho}{p}+1\right)^{\frac{(pk+m-1)(1-\theta)}{\rho}} M^{pk+m-1}
\nonumber\\
&&\qquad \qquad\qquad\times t^{\sigma}\bigg(\int_0^t(t-s)^{-\frac{N}{2}(\frac{1}{r}-\frac{1}{a})}s^{-\sigma(1+(pk+m-1)\theta)}\,ds\bigg)\nonumber\\
&&\qquad\leq Cd(u,v)\,\sum_{k=0}^{\infty}\frac{\lambda^k}{k!}\Gamma\left(\frac{\rho}{p}+1\right)^{\frac{(pk+m-1)(1-\theta)}{\rho}} M^{pk+m-1}
\qquad\qquad\qquad\qquad\qquad\nonumber\\
&&\qquad\qquad\qquad\times {\mathcal{B}}\left(1-\frac{N}{2}\left(\frac{1}{r}-\frac{1}{a}\right),1-\sigma(1+(pk+m-1)\theta)\right),
\end{eqnarray}
where the exponents $a,\,q,\,r,\,\theta,\,\rho$  satisfy for all $k,$
\begin{eqnarray*}
% \nonumber to remove numbering (before each equation)
m<a<{N(m-1)\over 2(2-m)_+}, \quad 1\leq r\leq a, && \frac{N}{2}\left(\frac{1}{r}-\frac{1}{a}\right)<1,\quad \sigma\big(1+(pk+m-1)\theta\big)<1,\nonumber\\
  0\leq\theta=\theta_k\leq1,\quad \frac{1}{r}=\frac{1}{a}+\frac{1}{q}, &&\quad \frac{1}{(pk+m-1)q}=\frac{\theta}{a}+\frac{1-\theta}{\rho},\quad p\leq \rho<\infty,\nonumber\\
  &&\hspace{-3cm}1-\frac{N}{2}\left(\frac{1}{r}-\frac{1}{a}\right)-
  (pk+m-1)\theta\sigma=0.
\end{eqnarray*}
For any $a>m$, one can choose
$$\frac{{N\over 2a}+1-{N\over 2}}{(pk+m-1)\sigma}<\theta_k<\frac{1}{pk+m-1}\min\left(m-1,\,{1-\sigma\over \sigma}\right).$$ It is obvious that for such $\theta_k$, there exist $r,\,q,\,\rho$ such that the above conditions are satisfied.

Using \eqref{gamma4}, \eqref{gamma1} and the fact that $1-\sigma>0$,  we obtain that
\begin{equation}\label{4.20}
 {\mathcal{B}}\left(1-\frac{N}{2}\left(\frac{1}{r}-\frac{1}{a}\right),1-\sigma\big(1+(pk+m-1)\theta\big)\right)\leq C,
\end{equation}
where $C>0$ is a constant independent on $k$. As above, we also have
\begin{equation}\label{4.21}
   \Gamma\left(\frac{\rho_k}{p}+1\right)^{\frac{(pk+m-1)(1-\theta_k)}{\rho_k}} \leq C^k k !.
\end{equation}
 Combining \eqref{4.18s}, \eqref{4.20} and \eqref{4.21} we have
\begin{eqnarray*}
&&t^{\sigma}\left\|\int_0^t{\rm e}^{(t-s)\Delta}\,(f(u)-f(v))\,ds\right\|_{a}\nonumber\\
&&\qquad\qquad\leq C\,d(u,v)\,\sum_{k=0}^{\infty}\frac{\lambda^k}{k!}\Gamma\left(\frac{\rho}{p}+1\right)^{\frac{(pk+m-1)(1-\theta)}{\rho}} M^{pk+m-1}\nonumber\\
&&\qquad\qquad\qquad\times {\mathcal B}\left(1-\frac{N}{2}\left(\frac{1}{r}-\frac{1}{a}\right),1-\sigma(1+(pk+m-1)\theta)\right)\nonumber\\
&&\qquad\qquad\leq C d(u,v) \sum_{k=0}^{\infty}{(C\lambda)^k} M^{pk+m-1}.\nonumber
\end{eqnarray*}
Then, we get (for small $M$)
\begin{equation*}\label{4.18ssss}
t^{\sigma}\left\|\int_0^t{\rm e}^{(t-s)\Delta}\,(f(u)-f(v))\,ds\right\|_{a}\leq C M^{m-1} d(u,v).
\end{equation*}
This together with \eqref{(4.991)} and \eqref{med3} concludes the proof of global existence for dimensions $N<2p/(p-1)$.

%%%%%%%%%%%%%%%%%%%%%%%%%%%%%%%%%%%%%%%%%%%%%%%%%%%%%%%%%%%%%%%%%%%%%%%%%%%%%%%%%%%%%%%%%%%%%%%%%%%%%%%%%

%%%%%%%%%%%%%%%%%%%%%%%%%%%%%%%%%%%%%%%%%%%%%%%%%%%%%%%%%%%%%%%%%%%%%%%%%%%%%%%%%%%%%%%%%%%%%%%ù%%%%%%%%%%%%%%%%%%%%%%%%%%%%%%%%%%
\subsection{The case $N=2p/(p-1)$}
%%%%%%%%%%%%%%%%%%%%%%%%%%%%%%%%%%%%%%%%%%%%%%%%%%%%%%%%%%%%%%%%%%%%%%%%%%%%%%%%%%%%%%%%%%%%%%%ù%%%%%%%%%%%%%%%%%%%%%%%%%%%%%%%%%%
Let  $u,\,v$ be two elements of $Y_M.$ By using \eqref{taylorm} and Proposition \ref{LPLQQ}, we obtain
\begin{eqnarray*}
 t^{\sigma}\|\Phi(u)(t)-\Phi(v)(t)\|_{a}&\leq&
t^{\sigma}\int_{0}^{t}\left\| {\rm e}^{(t-s)\Delta}
(f(u(s))-f(v(s)))\right\|_{a} ds\\&\leq&  t^{\sigma}
\int_{0}^{t}(t-s)^{-{N\over 2}(\frac{1}{r}-\frac{1}{a})}\left\|f(u(s))-f(v(s))\right\|_{r}\,ds\\
&&\hspace{-3cm}\leq C\sum_{k=0}^{\infty}\frac{\lambda^{k}}{k!} t^{\sigma}\int_{0}^{t}(t-s)^{-{N\over 2}(\frac{1}{r}-\frac{1}{a})}
\|(u-v)(|u|^{pk+m-1}+|v|^{pk+m-1})\|_{r}ds,
\end{eqnarray*}
where $1\leq r\leq a.$ We use the H\"{o}lder inequality with ${1\over r}={1\over a}+{1\over q}$ to obtain
\begin{eqnarray*}
 t^{\sigma}\|\Phi(u)(t)-\Phi(v)(t)\|_{a}&\leq& C\sum_{k=0}^{\infty}\frac{\lambda^{k}}{k!} t^{\sigma}\int_{0}^{t}(t-s)^{-{N\over 2}(\frac{1}{r}-\frac{1}{a})}
\|u-v\|_{a}\times\\
&&\||u|^{pk+m-1}+|v|^{pk+m-1}\|_{q}ds,\\
&\leq&  C\sum_{k=0}^{\infty}\frac{\lambda^{k}}{k!} t^{\sigma}\int_{0}^{t}(t-s)^{-{N\over 2}(\frac{1}{r}-\frac{1}{a})}
\|u-v\|_{a}\times\\ &&\left(\|u\|^{pk+m-1}_{{q(pk+m-1)}}+\|v\|^{pk+m-1}_{{q(pk+m-1)}}\right)ds.
\end{eqnarray*}
Similar calculations as in the subsection 4.1, give
\begin{equation}\label{4.12}
t^{\sigma}\|\Phi(u)-\Phi(v)\|_{a}\leq  CM^{m-1}\sup_{s>0}\left(s^{\sigma}\|u-v\|_{a}\right)=CM^{m-1} d(u,v).
\end{equation}
{Hence, we need $a$ to satisfy ${N(m-1)\over 2}<a<{N(m-1)\over 2}{1\over (2-m)_+}.$}

We now estimate $\|\Phi(u)\|_{L^{\infty}(0,\infty;\exp L^p)}.$ We have, by  \eqref{lioum} and \eqref{slim},
\begin{equation*}
\begin{split}
\|\Phi(u)\|_{L^{\infty}(0,\infty;\exp L^p)}&\leq \|{\rm e}^{t\Delta}u_{0}\|_{L^{\infty}(0,\infty;\exp L^p)}\\
&\hspace{1cm}+\left\|\int_{0}^{t}{\rm e}^{(t-s)\Delta} (f(u(s))) ds\right\|_{L^{\infty}(0,\infty;\exp L^p)}\\ &\hspace{-3cm}\leq \|u_{0}\|_{\exp L^p}
+\left\|\int_{0}^{t}{\rm e}^{(t-s)\Delta} (f(u(s))) ds\right\|_{L^{\infty}(0,\infty;\exp L^p)}\\ &\hspace{-3cm}\leq \|u_{0}\|_{\exp L^p}+\left\|\displaystyle\int_0^t{\rm e}^{(t-s)\Delta}(f(u(s)))\,ds\right\|_{L^{\infty}(0,\infty;L^{\phi})}\\&\hspace{0.5cm}+\left\|\displaystyle\int_0^t{\rm e}^{(t-s)\Delta}(f(u(s)))\,ds\right\|_{L^{\infty}(0,\infty;L^{p})}.
\end{split}
\end{equation*}

We first estimate $\|\displaystyle\int_{0}^{t}{\rm e}^{(t-s)\Delta} (f(u(s)))\,ds\|_{L^{\infty}(0,\infty;L^{\phi})}$. By the same argument as in the case $N> 2p/(p-1),$  using Corollary \ref{forn=8}, we obtain
\begin{equation}
\label{p}
\begin{split}
\left\|\int_{0}^{t}{\rm e}^{(t-s)\Delta} (f(u(s)))\,ds\right\|_{L^{\infty}(0,\infty;L^{\phi})} &\leq C M^m.
\end{split}
\end{equation}
Second  we estimate $\left\|\displaystyle\int_0^t{\rm e}^{(t-s)\Delta}\,(f(u(s)))\, ds\right\|_{L^{\infty}(0,\infty;L^p)}.$
By using \eqref{taylorm} and Proposition \ref{LPLQQ}, we obtain
\begin{equation*}
\left\|\int_{0}^{t}{\rm e}^{(t-s)\Delta}(f(u(s))) ds\right\|_{p}\leq C\int_{0}^{t}\left\|{\rm e}^{(t-s)\Delta}(f(u(s)))\right\|_{p} ds\leq C\int_{0}^{t}\left\|f(u(s))\right\|_{p} ds.
\end{equation*}
Using similar computations as for the term $\textbf{I},$ in the case $N<2p/(p-1),$ where we take $q=p $ there, we obtain
\begin{equation}
\label{4.30}
 \left \|\int_0^t{\rm e}^{(t-s)\Delta}\,(f(u(s))) ds\right\|_{L^{\infty}(0,\infty; L^p)}\leq CM^m.
\end{equation}
{Note here that we need the condition $(2-m)_+<{N(p-1)\over 2p}=1$ which is satisfied since $m>1.$ Also, we need $a$ to satisfy ${(m-1)p\over p-1}<a<{N(m-1)\over 2}{1\over (2-m)_+},$ which is consistent with the last conditions found on $a,$ since $N= 2p/(p-1).$ }

From \eqref{p} and \eqref{4.30}, it follows that
\begin{equation*}
\begin{split}
\|\Phi(u)\|_{L^{\infty}(0,\infty,\exp L^p)}&\leq \|u_{0}\|_{\exp L^p)}+2CM^m.
\end{split}
\end{equation*}

Now, by  \eqref{med3} the inequality \eqref{4.12} gives
\begin{equation*}
t^{\sigma}\|\Phi(u)\|_{p}\leq \|u_{0}\|_{\exp L^p}+ CM^m.
\end{equation*}
If we choose  $M$ and $\varepsilon$ small then  $\Phi$ maps $Y_M$ into itself.
Moreover, thanks to the inequality \eqref{4.12}  we obtain that  $\Phi$ is a contraction  map on $Y_M$. The conclusion follows by the Banach fixed point theorem.

%%%%%%%%%%%%%%%%%%%%%%%%%%%%%%%%%%%%%%%%%%%%%%%%%%%%%%%%%%%%%%%%%%%%%%%%%%%%%%%%%%%%%%%%%%%%%%%ù%%%%%%%%%%%%%%%%%%%%%%%%%%%%%%%%%%
\subsection{Proof of the statement \eqref{1.8}}
%%%%%%%%%%%%%%%%%%%%%%%%%%%%%%%%%%%%%%%%%%%%%%%%%%%%%%%%%%%%%%%%%%%%%%%%%%%%%%%%%%%%%%%%%%%%%%%ù%%%%%%%%%%%%%%%%%

{The proof of the statement \eqref{1.8} is done in \cite{MT-ICM2018}. For reader convenience we recall it here.} Let $q\geq \max(N/2,p)$. Using Lemma \ref{Orl-Leb} Part (iii) that is the embedding $L^p\cap L^\infty\hookrightarrow \EP$, and Proposition \ref{LPLQQ}, we write
\begin{eqnarray}\label{V.549}
  \|u(t)-{\rm e}^{t\Delta}u_0\|_{\EP} &\leq& \int_0^t\|{\rm e}^{(t-s)\Delta}
  f(u(s))\|_{\EP}\,ds\nonumber\\
  &\leq & C\int_0^t\|{\rm e}^{(t-s)\Delta}f(u(s))\|_{p}ds+C\int_0^t\|{\rm e}^{(t-s)\Delta}f(u(s))\|_{{\infty}}\,ds\nonumber\\
  &\leq & C\int_0^t\|f(u(s))\|_{p}ds+C\int_0^t(t-s)^{-\frac{N}{2q}} \|f(u(s))\|_{q}\,ds.
\end{eqnarray}
We now  estimate $\|f(u(s))\|_{r}$ for $r=p,\; q.$  Since
$
    |f(u)|\leq C|u|^m{\rm e}^{\lambda |u|^p},
$
we write
\begin{equation*}
 \|f(u)\|_{r}\leq C \||u|^m({\rm e}^{\lambda |u|^p}-1+1)\|_{r}.
\end{equation*}
Using H\"{o}lder inequality and Lemma \ref{sarah55}, we get
\begin{eqnarray*}
  \|f(u)\|_{r}&\leq& C\|u\|^{m}_{{2mr}}\|{\rm e}^{\lambda |u|^p}-1\|_{{2r}}+\|u\|^{m}_{{mr}}\\
&\leq&C\|u\|^{m}_{\EP}\left(\|{\rm e}^{\lambda |u|^p}-1\|_{{2r}}+1\right).
\end{eqnarray*}
By Lemma \ref{med}, we obtain
\begin{equation}\label{V.559}
\|f(u)\|_{r}\leq C \|u\|^{m}_{\EP} \left((2\lambda r M^p)^{\frac{1}{2r}}+1\right)\leq C \|u\|^{m}_{\EP}.
\end{equation}
Using \eqref{V.559} in \eqref{V.549}, we get
\begin{eqnarray*}
\|u(t)-{\rm e}^{t\Delta}u_0\|_{\EP}&\leq& C\int_0^t \left(\|u(s)\|^{m}_{\EP}+(t-s)^{-\frac{N}{2q}}
\|u(s)\|^{m}_{\EP}\right)ds\\
&\leq& Ct \|u\|^{m}_{L^{\infty}(0,\infty;\,\EP)}+Ct^{1-\frac{N}{2q}}\|u\|^{m}_{L^{\infty}(0,\infty;\,\EP)} \\
&\leq &C_1t+C_2t^{1-\frac{N}{2q}},
\end{eqnarray*}
where $C_1,\,C_2$ are  finite positive constants. Then
$
    \displaystyle\lim_{t\longrightarrow0}\,\|u(t)-{\rm e}^{t\Delta}u_0\|_{\EP}=0,
$
and  statement \eqref{1.8} is now proved. The fact that $u(t)\to u_0$ as $t\to 0$ in the weak$^*$ topology can be done as in \cite{Ioku}. This completes the proof of the theorem.

	%%%%%%%%%%%%%%%%%%%%%%%%%%%%%%%%%%%%%%%%%%%%%%%%%%%%%%%

\section*{Acknowledgments} {The authors wish to thank the anonymous referee for his/her valuable comments which
helped to improve the article.}

\end{document}